\newtheorem{lemma}{Lemma}
\newtheorem{theorem}{Theorem} 
 \newtheorem{proposition}{Proposition}
\newcommand{\cA}{\mathcal{A}}
\newcommand{\cB}{\mathcal{B}}
\newcommand{\cC}{\mathcal{C}}
\newcommand{\F}{\mathcal{F}}
\newcommand{\E}{\mathcal{E}}
\newcommand{\cE}{\mathcal{E}}
\newcommand{\cH}{\mathcal{H}}
\newcommand{\cG}{\mathcal{G}}
\newcommand{\ex}{{\rm ex}}
\newcommand{\ind}{{\rm\shortminus{ind}}}
\newcommand{\NN}{\mathbb{N}}
\DeclareMathSymbol{\shortminus}{\mathbin}{AMSa}{"39}
\newcommand{\blue}{\text{blue}}
\date{}
\newcounter{num}
\title{Induced Tur\'an problem in bipartite graphs}
\author{Maria Axenovich\thanks{Karlsruhe Institute of Technology, Karlsruhe, Germany, \texttt{maria.aksenovich@kit.edu}.} \and Jakob Zimmermann\thanks{Karlsruhe Institute of Technology, Karlsruhe, Germany}}
\begin{document}
\maketitle

\abstract{The classical extremal function for a graph $H$, $\ex(K_n, H)$ is the largest number of edges in a subgraph of $K_n$ that contains no subgraph isomorphic to $H$. 
Note that defining $\ex(K_n, H\ind)$ by forbidding induced subgraphs isomorphic to $H$ is not very meaningful for a non-complete $H$ since one can avoid it by considering a clique.

For graphs $F$ and $H$, let $\ex(K_n, \{F, H\ind\})$ be the largest number of edges in an $n$-vertex graph that contains no subgraph isomorphic to $F$ and no induced subgraph isomorphic to $H$. 
Determining this function asymptotically reduces to finding either $\ex(K_n, F)$ or $\ex(K_n, H)$, unless $H$ is a biclique or both $F$ and $H$ are bipartite. Here, we consider the bipartite setting, $\ex(K_{n,n}, \{F, H\ind\})$ when $K_n$ is replaced with $K_{n,n}$, $F$ is a biclique, and $H$ is a bipartite graph.

Our main result, a strengthening of a result by Sudakov and Tomon, implies that for any $d\geq 2$ and any $K_{d,d}$-free bipartite graph $H$ with  each vertex in one part of degree either at most $d$ or a full degree, so that there are at most $d-2$ full degree vertices in that part,  
one has $\ex(K_{n,n}, \{K_{t,t}, H\ind\}) = o(n^{2-1/d})$.
This provides an upper bound on the induced Tur\'an number for a wide class of bipartite graphs and implies in particular an extremal result for bipartite graphs of bounded VC-dimension by Janzer and Pohoata.
}

\section{Introduction}\label{sec:intro}
For  graphs $G$ and  $H$, we say that $G$ contains a {\it copy} (an {\it induced copy}) of $H$ if $G$ has a subgraph (induced subgraph) isomorphic to $H$. We also say that $G$ contains $H$ as a subgraph (as an induced subgraph) in respective cases, and write $H\subseteq G$ ($H\subseteq _{\rm ind} G$). If $G$ doesn't contain a copy of $H$, we say that $G$ is $H$-{\it free}.
The classical extremal function $\ex(\cG, H)$ for a fixed graph $H$ and a ground graph $\cG$ is defined as the largest number of edges in a subgraph of $\cG$ that contains no copy of $H$. This function is a subject of extensive studies over the last several decades, in particular for $\cG=K_n$, a complete graph on $n$ vertices or $\cG= K_{n,n}$, a complete bipartite graph with $n$ vertices in each part. 
Note that that defining $\ex(K_n, H\ind)$ by forbidding $H$ as an induced subgraph is not very meaningful since for non-complete $H$, one can avoid it as an induced subgraph by considering a clique. However, when one forbids a graph $H$ as an induced subgraph and another graph just as a subgraph, the analogous question becomes non-trivial. \\

Here, for a graph $F$ on at least two vertices and a non-empty graph $H$, let $\ex(\cG, \{F, H\ind\})$ be the largest number of edges in a subgraph of $\cG$ that contains no subgraph isomorphic to $F$ and no induced subgraph isomorphic to $H$. Note that for an empty $H$, $\ex(K_n, \{F, H\ind\})$ is not well-defined for large $n$ as a consequence of Ramsey's theorem.\\

Here, we are concerned with the case when $F$ is complete bipartite and $H$ is from a large class of bipartite graphs.
We shall formulate our result in terms of a specific graph $W(k, d, r)$ that contains, as an induced subgraph, any bipartite $K_{d,d}$-free graph with each vertex in one part of degree at most  $d$ or a full degree, for some sufficiently large $k$ and some $r$ (see Lemma \ref{lem:propertiesW} for a formal proof).\\

First we define a hedgehog graph.  If  $k,d, $ and $j$ are positive integers, where $k\geq d$, we say that a bipartite graph is a $(k, d, j)$-{\it hedgehog} and denote it $H(k, d,j)$  if it has a part $V$ of size $k$, called the {\it body of the hedgehog},  and a part $U$ that is a pairwise disjoint union of sets $U_E$,  each of size $j$,  such that for each $E\in \binom{V}{d}$ each vertex of $U_E$ is adjacent to each vertex of $E$, and there are no edges between $U_E$ and any vertex not in $E$. For integers $r, k, d$, where $0\leq r \leq d-2 \leq k-2$, we define a bipartite graph $W=W(k, d, r)$ as a union of a hedgehog $H(k, d, d-r-1)$ and a complete bipartite graph with parts $V$ and $Y$, where $V$ the the body of the hedgehog and $Y$ is disjoint from the vertex set of the hedgehog and has size $r$. We call the part $V$ {\it a body} of $W$. See Figure  \ref{fig:W}.\\

\begin{figure}[h!]
\begin{center}
\includegraphics[width=0.6\textwidth]{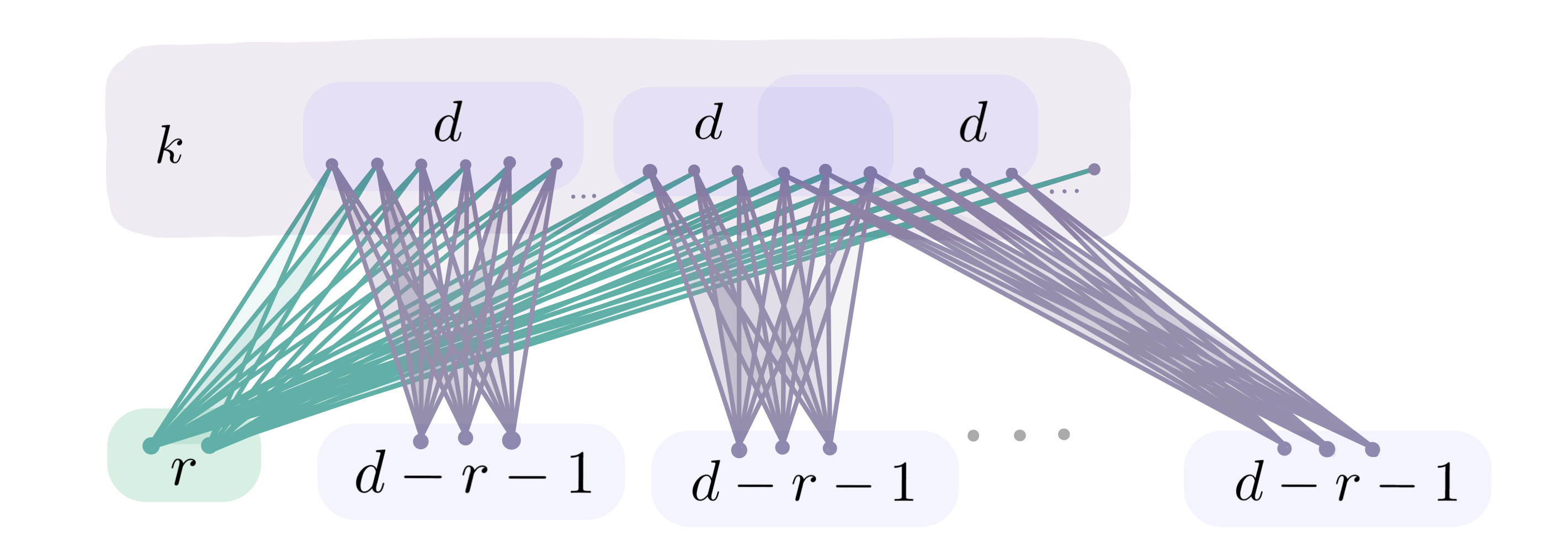}
\end{center}
\caption{A graph $W(k, d, r)$ with $r=2$ and $d=6$.}
\label{fig:W}
\end{figure}
 
\begin{theorem}\label{thm:main}
 Let $k, r, d, t $ be fixed non-negative integers, $r+2 \leq d \leq k$, and $\epsilon$ be an arbitrary positive number. Then there is $n_0$ such that for any $n>n_0$, 
 if $G=(A, B; E)$ is a bipartite graph, $|A|=|B|=n$ and $||G|| \geq \epsilon n^{2-\frac{1}{d}}$, then $G$ contains either a copy of $K_{t,t}$ or an induced copy of $W(k,d,r)$ with body in $A$.
 In particular, 
 $\ex(K_{n,n}, \{K_{t,t}, W(k,d,r)\ind\})= o(n^{2-\frac{1}{d}})$.
 Moreover, for any $\delta>0$,  $t\geq 2d-1$ and sufficiently large $k$, $\ex(K_{n,n}, \{K_{t,t}, W(k,d,r)\ind\})= \Omega(n^{2-\frac{1}{d}-\delta}) $. 
\end{theorem}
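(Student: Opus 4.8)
The theorem makes three assertions. The middle one, $\ex(K_{n,n},\{K_{t,t},W(k,d,r)\ind\})=o(n^{2-1/d})$, follows from the forcing assertion on letting $\epsilon\to0$, so it suffices to prove the forcing statement and the lower bound. For the forcing statement one may take $k$ as large as convenient, since $W(k,d,r)$ is an induced subgraph of $W(k',d,r)$ whenever $k'\ge k$ (keep a $k$-subset of the larger body, the sets $U_E$ indexed by the $d$-subsets lying inside it, and $Y$); thus it is enough to find an induced copy of $W(k',d,r)$ with body in $A$ for some large $k'=k'(k,d,r,t,\epsilon)$. After the usual cleaning we may also assume $G$ has minimum degree at least $c\,n^{1-1/d}$.

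For the forcing statement I would build on the Sudakov--Tomon theorem, whose heart is the extraction, from a $K_{t,t}$-free bipartite graph with $\Omega_\epsilon(n^{2-1/d})$ edges, of a constant-size set $V_0\subseteq A$ every $d$-subset of which has at least $\ell$ common neighbours in $B$ (for a prescribed constant $\ell$); since we sit at the extremal threshold $\Theta_\epsilon(n^{2-1/d})$ for $K_{d,d}$-free graphs this step is genuinely delicate, and I would take it essentially as in their work. The new content is to make the copy \emph{induced} and to install the $r$ vertices $Y$. For the induced part, $K_{t,t}$-freeness eliminates ``heavy'' traces: only $O(1)$ vertices of $B$ have trace of size $\ge t$ on $V_0$, so after discarding them each $d$-subset $E\subseteq V_0$ still has many common neighbours, all of trace size $\le t-1$, whence by pigeonhole $E$ admits a ``dominant trace'' $T_E$ with $E\subseteq T_E$, $|T_E|\le t-1$, realised \emph{exactly} by at least $\ell'$ vertices of $B$, where $\ell'\to\infty$ as $\ell\to\infty$. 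Colour each $(d+1)$-subset $\{x_0<\dots<x_d\}$ of $V_0$ by the set $\{\,i:\ x_i\in T_{\{x_0,\dots,x_d\}\setminus\{x_i\}}\,\}$ and, taking $|V_0|$ large enough for Ramsey's theorem for $(d+1)$-uniform hypergraphs, pass to a monochromatic $V\subseteq V_0$ with $|V|=k'$. A nonempty colour would, on choosing $E\subseteq V$ whose ``gap'' at the offending coordinate swallows the remaining $k'-d$ vertices of $V$, force $|T_E|\ge k'-d$, impossible once $k'>t+d-1$; hence the colour is empty, i.e.\ $T_E\cap V=E$ for every $d$-subset $E\subseteq V$. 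Then the pairwise disjoint sets of $\ge\ell'\ge d-r-1$ vertices realising the various $T_E$ have trace exactly $E$ on $V$ and supply the $U_E$'s. For $Y$, carry out the whole construction inside $N(Y_0)$ for a well-chosen $r$-set $Y_0\subseteq B$, so that $Y_0$ is complete to $V_0$ and hence to $V$; one has to check that passing to $N(Y_0)$ leaves enough density for the Sudakov--Tomon extraction, which uses $r<d$ and some bookkeeping. This yields an induced copy of $W(k',d,r)$, hence of $W(k,d,r)$, with body in $A$.

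For the lower bound I would take $G\sim G(n,n,p)$ with $p=n^{-1/d-\delta}$; we may assume $\delta$ small, since the bound for small $\delta$ implies it for larger $\delta$. Then $\mathbb E\|G\|=pn^{2}=n^{2-1/d-\delta}$, and the expected number of copies of $K_{t,t}$ is at most $n^{2t}p^{t^{2}}$, which by the identity $2t-\tfrac{t^{2}}{d}=(2-\tfrac1d)-\tfrac{(t-2d+1)(t-1)}{d}$ equals $n^{2-1/d-\delta}\cdot n^{-\frac{(t-2d+1)(t-1)}{d}-(t^{2}-1)\delta}=o(pn^{2})$ for $t\ge 2d-1$. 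The expected number of induced copies of $W(k,d,r)$ with body in $A$ is at most $C\,n^{\,k+r+\binom{k}{d}(d-r-1)}\,p^{\,kr+\binom{k}{d}d(d-r-1)}$, and for large $k$ its exponent is dominated by the term $-\binom{k}{d}(d-r-1)\,d\,\delta<0$ (using $d-r-1\ge1$), so it tends to $0$. Now delete from $G$ every edge lying in some copy of $K_{t,t}$; the resulting graph $G'$ is $K_{t,t}$-free, and since the deleted set $D$ has $\mathbb E|D|\le t^{2}\,\mathbb E(\#K_{t,t})=o(pn^{2})$, with probability $1-o(1)$ one has $\|G'\|=\Omega(pn^{2})=\Omega(n^{2-1/d-\delta})$. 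It remains to verify that with probability $1-o(1)$ the graph $G'$ has no induced $W(k,d,r)$: such a copy would be a partial copy of $W$ in $G$ --- all required edges present --- all of whose surplus edges lie in $D$, i.e.\ in some $K_{t,t}$ of $G$; but each surplus edge contributes, beyond the plain induced-copy count, a factor $O(n^{2t-2}p^{t^{2}})$ (choose the edge among $O(1)$ positions, then a $K_{t,t}$ through it among at most $n^{2(t-1)}$ options, present with probability $p^{t^{2}-1}$; overlaps only help), and $n^{2t-2}p^{t^{2}}=n^{-1/d-\frac{(t-2d+1)(t-1)}{d}-t^{2}\delta}=n^{-\Omega(1)}$ for $t\ge2d-1$; as the number of surplus edges is bounded, the first moment stays $o(1)$. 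Hence such a $G'$ exists, which gives $\ex(K_{n,n},\{K_{t,t},W(k,d,r)\ind\})=\Omega(n^{2-1/d-\delta})$ for all sufficiently large $k$.

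The main obstacle is the forcing statement: importing the Sudakov--Tomon extraction of the rich constant-size body $V_0$ (delicate precisely because here we sit at the extremal threshold, with only $\Theta_\epsilon(n^{2-1/d})$ edges), and then reconciling the two extra demands this work adds: that the final body $V$ support \emph{exact} traces on all its $d$-subsets (via the dominant-trace pigeonhole and the hypergraph-Ramsey step, which is where $K_{t,t}$-freeness is really used) and that it simultaneously lie in a common neighbourhood of $r$ further vertices (for $Y$), all without wasting the edge budget. The lower bound is, by contrast, a soft first-moment computation; its only subtlety is that deleting edges to destroy all copies of $K_{t,t}$ might create a new induced copy of $W$, which the refined first moment above precludes.
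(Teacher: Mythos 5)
Your lower-bound argument is essentially sound (and, in substance, the same alteration idea as the paper's Deletion-method Lemma), but it is over-engineered: for large $k$ the expected number of copies of $W(k,d,r)$ as a \emph{not necessarily induced} subgraph of $G(n,n,p)$ is already $o(1)$, so with high probability $G$, and hence every subgraph $G'$ obtained by deleting edges, contains no $W$-subgraph and in particular no induced copy; your refined ``surplus edges covered by a $K_{t,t}$'' first moment is therefore unnecessary. The genuine gap is in the forcing statement, and it sits exactly where you yourself flag ``the main obstacle'': you defer to Sudakov--Tomon the extraction of a constant-size $V_0\subseteq A$ \emph{all} of whose $d$-subsets have at least $\ell$ common neighbours, for a large prescribed constant $\ell$. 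No such statement can be imported from their work, because it is false at the $\epsilon n^{2-1/d}$ threshold: for $d=2$, $r=0$ and any $t\geq 2$, the point--line incidence graph of a projective plane is $K_{2,2}$-free (hence $K_{t,t}$-free), regular with $\Theta(n^{3/2})$ edges, yet every pair of points has exactly one common neighbour, so even $\ell=2$ is unattainable --- while your dominant-trace pigeonhole needs $\ell$ of order $\binom{|V_0|}{t-1}(d-r-1)$ plus the number of heavy-trace vertices, with $|V_0|$ at least a $(d+1)$-uniform Ramsey number. The theorem only requires each $d$-subset of the body to have $s=d-r-1$ \emph{private} common neighbours, and the entire difficulty is the regime in which $d$-subsets have few (between $s$ and a constant $q$) common neighbours.

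That regime is what the paper's Proposition 1 is built for: it forms the $d$-uniform hypergraph of $d$-sets with at least $s$ common neighbours, colours an edge red if it has at least $q$ common neighbours and blue otherwise, handles a red $q$-clique by a K\H{o}v\'ari--S\'os--Tur\'an plus good/bad-vertex counting argument (your trace/hypergraph-Ramsey device is a workable alternative for this red case only), and in the absence of red $q$-cliques derives a contradiction by counting blue $q$-cliques in two ways: an upper bound exploiting a maximum-degree condition on $B$, and a lower bound via common neighbourhoods of $s$-subsets of $B$, hypergraph Ramsey, and the Hypergraph Removal Lemma. None of this appears in your sketch, nor do the reductions that make the Proposition applicable --- almost-regularization, the KST-based choice of the $r$-set $X$ with a large common neighbourhood $A$ (which also creates the needed unbalancedness $|A|\leq \xi|B|^{s/d}$), and the removal of the high-degree set $V_{\mathrm{bad}}$ so that every vertex of $B$ has degree at most $\kappa|A|$; your ``some bookkeeping'' for working inside $N(Y_0)$ conceals precisely these conditions. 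As written, the proposal proves the theorem only in the case where every $d$-subset of some constant-size set is very rich, which is not guaranteed and is not the hard case.
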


If $r=0$, we have that $W(k, 2, 0)= H(k, 2, 1)$, that is isomorphic to a $1$-subdivision of $K_k$, denoted $K_k'$.
Note that in this case, for any $k\geq 3$, $\ex(K_{n,n}, K_k')=O(n^{3/2-1/(4k-6)})$, as shown by Janzer \cite{J}, see also an earlier result by Conlon and Lee \cite{CL}. So, in this case Theorem \ref{thm:main} gives a weaker statement. \\

Our proof of Theorem \ref{thm:main} in Section \ref{sec:main-proofs} merges the ideas of Sudakov and Tomon as well as Janzer and Pohoata who proved the following two theorems. We shall show that these theorems follow from Theorem  \ref{thm:main}.

\begin{theorem}[Sudakov and Tomon \cite{sudakov2019turan}] \label{thm:ST}
 Let $d \geq 2$ be an integer and $H$ be a $K_{d,d}$-free bipartite graph $H$ 
 such that every vertex in one part of $H$ has degree at most $d$. Then $\ex(K_n,H) = o(n^ {2 - \frac{1}{d}})$.
\end{theorem}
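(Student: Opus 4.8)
The plan is to deduce Theorem~\ref{thm:ST} directly from Theorem~\ref{thm:main}. Fix $d\ge 2$ and a $K_{d,d}$-free bipartite graph $H$ with parts $X,Y$ such that every vertex of $X$ has degree at most $d$, and set $h=|V(H)|$. First I would pass from the host $K_n$ to a balanced bipartite host in the standard way: if $G$ is an $H$-free graph on $n$ vertices with the maximum number $\ex(K_n,H)$ of edges, then a maximum cut of $G$ yields a bipartite subgraph with at least $\ex(K_n,H)/2$ edges whose two parts have size at most $n$; this subgraph is still $H$-free and embeds into $K_{n,n}$, so $\ex(K_n,H)\le 2\,\ex(K_{n,n},H)$, where $\ex(K_{n,n},H)$ denotes the largest number of edges in an $H$-free subgraph of $K_{n,n}$. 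Hence it is enough to show $\ex(K_{n,n},H)=o(n^{2-1/d})$.

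Next I would realise $H$ as an induced subgraph of an admissible graph $W(k,d,r)$. Since every vertex of $X$ has degree at most $d$, in particular ``at most $d$ or a full degree'', Lemma~\ref{lem:propertiesW} provides an induced copy of $H$ in some $W(k,d,r)$ with $k$ large; here one can in fact take $r=0$, i.e.\ the pure hedgehog $W(k,d,0)=H(k,d,d-1)$. The idea is to map $Y$ onto a $|Y|$-subset $V'$ of the body $V$ and to send each vertex $x\in X$ of degree at most $d$ into some $U_{E_x}$ whose $d$-set $E_x$ satisfies $E_x\cap V'=N_H(x)$; this is possible once $k$ is large, since $|N_H(x)|\le d$ and every $U_E$ is nonempty. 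The one delicate point is a full-degree vertex $x$ of $X$: then $N_H(x)=Y$, so $|Y|\le d$; if $|Y|=d$ then $K_{d,d}$-freeness of $H$ leaves at most $d-1$ such vertices, all of which can be placed inside $U_{V'}$ (a set of size exactly $d-1$ when $r=0$), and if $|Y|<d$ there is again ample room among the spikes. Thus $H\subseteq_{\rm ind} W(k,d,0)$ for all large $k$, and the parameters satisfy $0+2\le d\le k$ as required by Theorem~\ref{thm:main}.

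For the last step, fix $t=\max(h,2)$, choose $k$ large as above, and let $\epsilon>0$. Applying Theorem~\ref{thm:main} to the parameters $(k,0,d,t)$ gives an $n_0$ such that for every $n>n_0$, any bipartite $G=(A,B;E)$ with $|A|=|B|=n$ and $\|G\|\ge\epsilon n^{2-1/d}$ contains a copy of $K_{t,t}$ or an induced copy of $W(k,d,0)$. In the first case $H\subseteq K_{t,t}\subseteq G$ because $t\ge h$; in the second case $H\subseteq_{\rm ind} W(k,d,0)\subseteq_{\rm ind} G$, so in particular $H\subseteq G$. Either way $G$ contains $H$, so every $H$-free subgraph of $K_{n,n}$ has fewer than $\epsilon n^{2-1/d}$ edges once $n>n_0$; as $\epsilon>0$ was arbitrary, $\ex(K_{n,n},H)=o(n^{2-1/d})$, and combined with the first step, $\ex(K_n,H)=o(n^{2-1/d})$. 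I expect the second step to be the main obstacle: it relies on the universality of $W$ (Lemma~\ref{lem:propertiesW}) and on carefully handling the full-degree vertices of $X$, in particular the boundary situation with $d-1$ of them and $|Y|=d$, which does not literally fit the ``at most $d-2$ full-degree vertices'' hypothesis but is nonetheless absorbed into $W(k,d,0)$ by the counting indicated above.
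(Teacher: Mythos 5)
Your proposal is correct and takes essentially the same route as the paper: reduce to a balanced bipartite host via a max cut, embed $H$ into $W(k,d,0)$ using Lemma \ref{lem:propertiesW} (1) (whose hypotheses with $r=0$ hold since $H$ is $K_{d,d}$-free and every vertex of one part has degree at most $d$), and apply Theorem \ref{thm:main} with $t$ at least the size of each part of $H$. The special handling you give to degree-$d$ (``full-degree'') vertices is already subsumed by Lemma \ref{lem:propertiesW} (1), so no separate argument is needed there.
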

A related result by F\"uredi \cite{F91} and Alon, Krivelevich, and Sudakov \cite{AKS} shows that $\ex(n, H) = O(n^ {2 - \frac{1}{d}})$, for a bipartite graph $H$ such that every vertex in each part has degree at most $d$, that is tight since $\ex(n, K_{s,d}) = \Omega(n^{2-1/d})$, for sufficiently large $s$.
The following result is a strengthening of a result by Fox et al. \cite{FPSSZ}. We formally define the VC-dimension in Section \ref{sec:lemmas}.

\begin{theorem}[Janzer and Pohoata \cite{janzer2021zarankiewicz}] \label{thm:JP}
 Let $k$ and $d$ be integers such that $t\geq d\geq 3$. Let $G$ be a bipartite graph on partite sets of sizes $n/2$ each and with VC-dimension at most $d$ with respect to one of its sides. If $K_{t,t}\nsubseteq G$ then $|E(G)| = o(n^{2-\frac{1}{d}})$.
\end{theorem}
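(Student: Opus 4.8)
The plan is to obtain Theorem~\ref{thm:JP} as a consequence of Theorem~\ref{thm:main}, by pinning down one graph $W$ whose appearance as an induced subgraph is incompatible with having VC-dimension at most $d$. I would work with $W(k,d,1)$: this is a legitimate choice of parameters, since the assumption $d\ge3$ gives $r+2=3\le d$, and $k$ may be taken as large as needed.

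First I would check that, with respect to its body, $W(k,d,1)$ has VC-dimension at least $d+1$ once $k$ is large, say $k\ge 2d+1$. Fix a $(d{+}1)$-subset $S$ of the body $V$. For a proper subset $T\subsetneq S$, pick $E\in\binom{V}{d}$ with $T\subseteq E$, $E\cap(S\setminus T)=\emptyset$, and $E\setminus T\subseteq V\setminus S$; this is possible because $|V\setminus S|=k-d-1\ge d\ge|E\setminus T|$. Every vertex of $U_E$ is then joined in $W$ to exactly $E$, hence has trace exactly $T$ on $S$. The lone vertex of $Y$ is joined to all of $V$, so its trace on $S$ equals $S$. Thus all $2^{d+1}$ subsets of $S$ occur as traces of other-side vertices of $W$, i.e.\ $S$ is shattered. (This is exactly where $r\ge1$ is needed: $W(k,d,0)$ has VC-dimension only $d$; and $1\le r\le d-2$ forces $d\ge3$, matching the hypothesis.)

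With this, suppose $G$ is as in Theorem~\ref{thm:JP}: parts of size $N:=n/2$, no $K_{t,t}$, and VC-dimension at most $d$ with respect to one side, which I relabel as $A$. Fix $k\ge2d+1$, and suppose for contradiction that $|E(G)|\ge\epsilon N^{2-1/d}$ for some fixed $\epsilon>0$ and arbitrarily large $N$. Since $G$ is $K_{t,t}$-free, Theorem~\ref{thm:main} (applied with this $k$ and $r=1$, after swapping the two parts if necessary so that the body lands in $A$) gives an induced copy of $W(k,d,1)$ with body $V\subseteq A$. Because this copy is induced, a vertex of $B$ playing the role of a vertex $w$ on the other side of $W$ has the same neighbourhood inside $V$ as $w$ has inside the body of $W$; hence the $(d{+}1)$-set shattered by $W$ inside its body corresponds to a $(d{+}1)$-subset of $V\subseteq A$ that is shattered in $G$, contradicting that $G$ has VC-dimension at most $d$ with respect to $A$. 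Therefore $|E(G)|<\epsilon N^{2-1/d}$ for every $\epsilon>0$ and all large $N$, i.e.\ $|E(G)|=o(N^{2-1/d})=o(n^{2-1/d})$.

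I expect the only real work to be the VC-dimension computation for $W(k,d,1)$, where the one subtle point is that ``induced'' is precisely what allows the shattering to be transported from $W$ to $G$ — a non-induced copy could have some would-be quill joined to an extra vertex of $V\setminus E$, destroying its trace. I would also verify the degenerate ranges of $k$ and $d$ separately, but choosing $k$ large (which is permitted) avoids them.
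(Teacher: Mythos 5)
Your proposal is correct and follows essentially the same route as the paper: apply Theorem~\ref{thm:main} to get an induced copy of some $W(k,d,r)$ with body in the side of bounded VC-dimension, and then contradict the VC bound, using inducedness to transport the shattering. The only difference is cosmetic: you take $r=1$ with $k\ge 2d+1$ and verify the shattering of a $(d+1)$-set directly, whereas the paper takes $r=d-2$ and invokes Lemma~\ref{lem:propertiesW}~(2) to find the induced incidence graph of $([d+1],2^{[d+1]})$; both verifications are valid.
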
 
Note that Theorem \ref{thm:JP} doesn't hold for $d=2$ as can be seen by considering a balanced $K_{2,2}$-free bipartite graph on $2n$ vertices
and $\Omega(n^{3/2})$ edges, see for example \cite{F}. \\
Note also that Theorem \ref{thm:main} is strictly stronger than  Theorem \ref{thm:ST} because it about forbidden graphs with some vertices of high degree. In addition 
Theorem \ref{thm:main} is strictly stronger than  Theorem \ref{thm:JP} because having the VC-dimension greater than $d$ implies the existence of a graph that is a proper induced subgraph of  $W(k,d,r)$.\\

The question about extremal properties of graphs with forbidden induced subgraphs is not new and was studied a lot in the framework of Erd\H{o}s-Hajnal conjecture \cite{EH}, asserting that any graph $G$ without a given induced subgraph has a large (of size $|G|^\beta$, for some positive constant $\beta$) clique or a co-clique. While in general the conjecture is open, for bipartite graphs it is shown to be true by Erd\H{os}, Hajnal, and Pach \cite{EHP}, see also \cite{ATW} for a more quantitative version. For graphs with bounded VC-dimension, it was shown that the conjecture holds by  Nguyen,  Scott,  and Seymour \cite{NSS},  see also a paper by  Fox, Pach, and Suk \cite{FPS}. In addition, the so-called Ramsey-Tur\'an number ${\rm RT}(n,m,F)$, introduced by S\'os, is also connected to the above question, see \cite{SS} and \cite{BHS}. Here, ${\rm RT}(n, m , F)$ is the largest number of edges in an $n$-vertex graph that doesn't contain $F$ as a subgraph and doesn't contain an empty graph on $m$ vertices as an induced subgraph. However, here, $m$ is typically considered to be a function of $n$.\\

Finding $\ex(K_n, \{F, H\ind\})$ reduces to determining $\ex(K_n, H)$ or $\ex(K_n, F)$ asymptotically, unless both $F$ and $H$ are bipartite, or $H$ is complete bipartite and $F$ is not bipartite, see Lemma \ref{lem:ind}. The case when $H$ is complete bipartite was addressed by Loh et al. \cite{LTTZ}, Illingworth \cite{I}, as well as Ergemlidze, Gy\H{o}ri, and Methuku \cite{EGM}. When $F=K_{t,t}$ and $H$ is a forest, a result by Scott, Seymour, and Spirkl \cite{SSS}, within a $\chi$-boundedness project, implies that $\ex(K_n, \{F, H\ind\}) \leq t^cn$, for some constant $c=c(H)$. See also an earlier result by Bonamy et al. \cite{BBPRTW} when $H$ is a path.  For bipartite $H$, 
Lemma 7.1. in  Du, Gir\~ao, Hunter, McCarty,  and Scott \cite{GH} and  Theorem 1.4. in  Bourneuf,  Buci\'c, Cook, and Davies  \cite{BBCD}, obtained independently and using different approaches,  claim that $\ex(K_n, \{K_{t,t}, H\ind\}) \leq O(n^{2- \epsilon(H)})$, where the best value for $\epsilon(H)$ is  $1/(100\Delta(H))$ given in \cite{GH}.  Note that  Hunter, Milojevi\'c, Sudakov, and Tomon \cite{HMST} showed the following bound: $\ex(K_n, \{K_{t,t}, H\ind\}) \leq (4|A||B|t)^{4(|A|+|B|)+10} n^{2-1/d}$, for a bipartite graph $H$ with parts $A$ and $B$ such that each vertex in $B$ has degree at most $d$.  \\

The rest of the paper is structured as follows. 
We list some standard tools and definitions, as well as  Lemma \ref{lem:propertiesW} on properties of the graph $W(k,d,r)$ and Lemma \ref{lem:ind}  on properties of $\ex(n, \{F, H\ind\})$ in Section \ref{sec:lemmas}. We  prove Theorem \ref{thm:main} and derive Theorems  \ref{thm:ST} and \ref{thm:JP} from it in Section \ref{sec:main-proofs}.

\section{ Notations, Definitions, and Preliminary Lemmas}\label{sec:lemmas}

For a hypergraph $\F = (V, \E)$ with vertex set $V$ and edge set $\E \subseteq 2^V$, we often identify $\F$ with the set of edges $\E$. We denote the number of vertices in $\F$ by $|\F|$ and the number of edges in $\F$ by $||\F||$.
A hypergraph is $d$-{\it uniform} if each hyperedge has size $d$.

We shall denote a bipartite graph $G$ with partite sets (parts) $A$ and $B$ and edge set $E$ by $G=(A, B; E)$.
For bipartite graphs $G=(A, B; E)$ and $G'=(A',B'; E')$, we say that $G'$ is a subgraph of $G$ {\it respecting sides} if $G'$ is a subgraph of $G$, $A'\subseteq A$ and $B'\subseteq B$. \\

For a hypergraph $\F=(V, \E)$, or respective set system $\E$, the {\it VC-dimension} of $\F$
 is the largest integer $d$ such that there is a subset $S$ of $V$ that is shattered by edges of $\F$, i.e., such that for any subset $S'$ of $S$ there is $E\in \E$ so that $S\cap E=S'$. This notion was introduced by Vapnik and Chervonenkis \cite{VC}.
The {\it VC-dimension of a bipartite graph} $G=(A\cup B, E)$ with parts $A$ and $B$, {\it with respect to $A$}, denoted 
 is the VC-dimension of the hypergraph with vertex set $A$ and edge set corresponding to the neighbourhoods of vertices from $B$.
The {\it incidence graph} of the hypergraph $\F$ is a bipartite graph with parts $V$ and $\E$ and set of edges $\{\{v, E\}: v\in V, v\in E, E\in \E\}$. Note that a bipartite graph $G$ with parts $A$ and $B$ has VC-dimension at most $d$ with respect to $A$ 
if and only if $G$ does not contain an induced copy of the incidence graph of the hypergraph
with vertex set $[d+1]$ and edge set $2^{[d+1]}$, such that the copy of $[d+1]$ is in $A$. Note that $H(k, d,1)$ corresponds to the incidence graph of the complete $d$-uniform hypergraph on $k$ vertices.\\ 
 
For a graph $G$ and disjoint vertex sets $A$ and $B$, we write $A\sim B$ if each vertex in $A$ is adjacent to each vertex in $B$. We further write $G[A,B]$ to denote a bipartite subgraph of $G$ with parts $A$ and $B$ and containing all edges with one endpoint in $A$ and another in $B$; by $||A,B||$ we denote the number of edges in $G[A,B]$, when $G$ is clear from context. By $N(A)=N_G(A)$, the neighbourhood of $A$, we denote the maximal set $B$ of vertices such that $A\sim B$. 
Note that this definition differs from the often used notion of a set neighbourhood defined as the union of it vertex neighbourhoods. Here, $N(A)$ is the intersection of neighbourhoods of vertices from $A$.
\\

For a graph $G$, we denote by $\Delta(G)$ and $\delta(G)$ the maximum and minimum degree of $G$, respectively. 
For $K>0$, a graph $G$ is $K$-{\it almost regular} if $\Delta(G)/\delta(G) \leq K$. 
For an $\ell$-uniform hypergraph $\cH$, a $q$-{\it clique} in $\cH$ is a set $S$ of $q$ vertices such that the edge set of $\cH$ contains all $\ell$-element subsets of $S$.\\

Most items of the  following Lemma are given by Illingworth \cite{I-1}. We include the proof of a slightly more precise statement for completeness.
\begin{lemma}\label{lem:ind} Let $F$ be a graph, $H$ be a non-empty graph, and $r = \min \{\chi(H), \chi(F) \}$. 
\begin{enumerate}
\item{} Let $r\geq 3$. 
If $H$ is a complete multipartite graph, then $\ex(n, \{F, H\ind\}) = (1-1/(r-1))\binom{n}{2}(1+o(1))$.
Otherwise $\ex(n, \{F, H\ind\}) = (1-1/(\chi(F)-1))\binom{n}{2}(1+o(1)).$ In either case $\ex(n, \{F, H\ind\})\in \{ \ex(n, H)(1+o(1)), \ex(n, F)(1+o(1))\}$.
\item{} Let $r=2$ and $\chi(H)>2$. Then 
$\ex(n, F)/2 \leq \ex(n, \{F, H\ind\}) \leq \ex(n, F)$. 
\item{} Let $r=2$, $\chi(F)>2$, and $H$ be not complete bipartite. Then 
$\ex(n, \{F, H\ind\}) = \ex(n, F)(1+o(1))$. 
\end{enumerate}
\end{lemma}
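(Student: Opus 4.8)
The plan is to treat the three parts separately, relying on two standard tools. The first is the Erd\H{o}s-Stone-Simonovits theorem, $\ex(n,J)=(1-1/(\chi(J)-1)+o(1))\binom{n}{2}$ for every graph $J$ with an edge, together with its supersaturation form: for fixed $p\geq 2$ and fixed $s$, every $n$-vertex graph with at least $(1-1/(p-1)+\delta)\binom{n}{2}$ edges contains a complete $p$-partite subgraph all of whose $p$ parts have size $s$, once $n$ is large. The second is the elementary fact that every induced subgraph of a complete multipartite graph is complete multipartite with no more parts; consequently the Tur\'an graph $T_{p-1}(n)$ is $(p-1)$-colorable (hence $F$-free when $\chi(F)\geq p$) and has no induced copy of any graph that is not complete multipartite on at most $p-1$ parts. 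Since $\ex(n,\{F,H\ind\})\leq\ex(n,F)$ holds trivially, in each part I only need a lower-bound construction, together with a matching upper bound in Part~1. The hypotheses on $r$ force $\chi(F),\chi(H)\geq 2$, so Erd\H{o}s-Stone-Simonovits applies to both $F$ and $H$.

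For Part~1 ($r\geq 3$) I would first dispose of the case where $H$ is not complete multipartite: then $T_{\chi(F)-1}(n)$ is $F$-free and has no induced $H$, so with the trivial upper bound the extremal number is $(1-1/(\chi(F)-1))\binom{n}{2}(1+o(1))=\ex(n,F)(1+o(1))$. If $H$ is complete multipartite it has exactly $\chi(H)\geq r$ parts, so $T_{r-1}(n)$ is $F$-free and has too few parts to contain an induced $H$, which gives the lower bound $(1-1/(r-1))\binom{n}{2}(1+o(1))$. For the matching upper bound, given an $n$-vertex $G$ with at least $(1-1/(r-1)+\delta)\binom{n}{2}$ edges, I split on $\chi(F)$: if $\chi(F)=r$ this count already exceeds $\ex(n,F)$ for large $n$, so $F\subseteq G$; if $\chi(F)>r$ then necessarily $r=\chi(H)$, and by supersaturation $G$ has a complete $r$-partite subgraph with parts $P_1,\dots,P_r$ each of size $R(|F|,|H|)$, inside each of which Ramsey's theorem yields either a clique on $|F|$ vertices (which contains a copy of $F$) or an independent set large enough to accommodate the $i$-th part of $H$; choosing such independent sets in all $r$ parts produces an induced copy of $H$ in $G$. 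The last sentence of Part~1 follows since $(1-1/(r-1))\binom{n}{2}(1+o(1))$ equals $\ex(n,H)(1+o(1))$ when $r=\chi(H)$ and $\ex(n,F)(1+o(1))$ when $r=\chi(F)$, while in the non-complete-multipartite case the value is already $\ex(n,F)(1+o(1))$.

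Parts~2 and~3 are short. For Part~2 ($r=2$, $\chi(H)>2$, hence $\chi(F)=2$) I take an extremal $F$-free graph on $n$ vertices, pass to a bipartite subgraph keeping at least half its edges (max-cut), and note that it is $F$-free and, being bipartite, contains no copy at all of the non-bipartite graph $H$, hence no induced copy; this gives the lower bound $\ex(n,F)/2$, the upper bound being the trivial one. For Part~3 ($r=2$, $\chi(F)>2$, $H$ not complete bipartite, hence $\chi(H)=2$) I use $T_{\chi(F)-1}(n)$: it is $F$-free, has $(1-1/(\chi(F)-1))\binom{n}{2}(1+o(1))=\ex(n,F)(1+o(1))$ edges, and has no induced $H$, since such a copy would make $H$ complete multipartite and hence, being bipartite, complete bipartite (allowing one empty part), contrary to hypothesis.

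The routine ingredients are the Tur\'an-graph constructions, the max-cut step, and the trivial upper bounds. The one step I expect to be the real obstacle is the Part~1 upper bound when $H$ is complete multipartite and $\chi(F)>\chi(H)$: there the target is an \emph{induced} copy of $H$, whereas Erd\H{o}s-Stone only supplies $H$ as a subgraph of a complete multipartite blow-up. The crux is the Ramsey cleanup inside each part, which either turns the part into an independent set or exposes a clique (and hence a copy of $F$); everything else is bookkeeping with chromatic numbers.
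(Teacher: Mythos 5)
Your proposal is correct and follows essentially the same route as the paper: Tur\'an-graph constructions for the lower bounds, the trivial bound $\ex(n,\{F,H\ind\})\leq\ex(n,F)$ with Erd\H{o}s--Stone for the upper bounds, the max-cut step in Part~2, and in the key case ($H$ complete multipartite with $\chi(H)<\chi(F)$) a Ramsey cleanup inside the parts of a complete multipartite blow-up to produce either $F$ or an induced $H$. The only cosmetic difference is that you obtain the blow-up $K_r(s)$ via supersaturation, while the paper packages the same step as $\ex(n,\{F,H\ind\})\leq\ex(n,T_r(t))$ and applies Erd\H{o}s--Stone to $T_r(t)$; your Part~3 also correctly uses $T_{\chi(F)-1}(n)$, which is what the paper intends there.
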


\begin{proof}
\begin{enumerate}
\item{} We have that $F, H\not\subseteq T_{r-1}(n)$, where $T_{r-1}(n)$ is the Tur\'an graph with $r-1$ parts. Thus $\ex(n, \{F, H\ind \})\geq ||T_{r-1}(n)|| = (1-1/(r-1))\binom{n}{2}(1+o(1))$. 
If $H$ is not a complete multipartite graph, then $H$ is not an induced subgraph of any complete multipartite graph. In particular, $H$ is not an induced subgraph of $T_{\chi(F)-1}(n)$. Since $F\not\subseteq T_{\chi(F)-1}(n)$, we have $\ex(n, \{F, H\ind \})\geq ||T_{\chi(F)-1}(n)|| = (1-1/(\chi(F)-1))\binom{n}{2}(1+o(1))$.\\
\sloppy
Since $\ex(n, \{F, H\ind\})\leq \ex(n, F)$, using Erd\H{o}s-Stone Theorem
we have that 
$\ex(n, \{F, H\ind\})\leq (1-1/(\chi(F)-1))\binom{n}{2}(1+o(1))$.
If $r= \chi(H) < \chi(F)$ and $H$ is complete $r$-partite, we see that $\ex(n, \{F, H\ind\})\leq \ex(n, T_r(t))$, for sufficiently large $t$. Indeed, if $T=T_r(t)\subseteq G$, then either some part of $T$ induces a subgraph of $G$ that contains $F$ as a subgraph or each part of $T$ induces a large independent set by Ramsey Theorem. In the latter case $G$ contains an induced copy of $H$. So, $\ex(n, \{F, H\ind\})\leq (1-1/(r-1))\binom{n}{2}(1+o(1))$.\\

\item{} Let $F$ be bipartite and $H$ not bipartite. Then any $F$-free bipartite graph doesn't contain $H$ as a subgraph, so by taking a densest bipartite subgraph of an $F$-free $n$-vertex graph, we have $\ex(n, \{F, H\ind\}) \geq \ex(n, F)/2$. In addition, $\ex(n, \{F, H\ind\}) \leq \ex(n, F).$ 

\item{}
If $H$ is bipartite, but not complete bipartite, and $\chi(F)\geq 3$, we have that $T_{r-1}(n)$ contains neither $H$ as an induced subgraph, nor $F$ as a subgraph, so $\ex(n, \{F, H\ind\}) \geq ||T_{r-1}(n)|| =\ex(n, F) (1+o(1))$.
Since $\ex(n, \{F, H\ind\}) \leq \ex(n, F)$, the result follows.
\end{enumerate}
\end{proof}

The only remaining cases are when $H$ is a biclique or when both $H$ and $F$ are bipartite.
Regarding the former case Loh et al. \cite{LTTZ} proved that $\ex(n, \{K_r, K_{s,t}\ind\}) = O( n ^ { 2 - \frac{ 1 }{ s } } )$
asymptotically matching the upper bound on $ex(n, K_{s,t})$ by K\H{o}v\'ari-S\'os-Tur\'an in Lemma \ref{thm:KST}.\\

We shall use the following result by Nagl, R\"odl, and Schacht, \cite{NRG}, also proved by Gowers \cite{G}. 
\begin{lemma}[Hypergraph Removal Lemma]
 For any $\epsilon>0$, any integers $q$ and $d$, $q\geq d\geq 2$, there is $\delta= \delta(\epsilon, d, q)>0$ such that if $\cH$ is a $d$-uniform hypergraph and one needs to delete at least $\epsilon \binom{|\cH|}{d}$ hyperedges to destroy all $q$-cliques, then number of $q$-cliques in $\cH$ is at least $\delta \binom{|\cH|}{q}$.
\end{lemma}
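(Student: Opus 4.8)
The plan is to obtain this statement as a consequence of the \emph{hypergraph regularity method}: the hypergraph regularity lemma together with its associated counting (embedding) lemma. This is exactly the route of \cite{NRG} and \cite{G}, and I would not try to bypass regularity, since no proof of comparable generality is known that does. Write $n := |\cH|$ and observe that a $q$-clique of $\cH$ is precisely a copy of the complete $d$-uniform hypergraph $K_q^{(d)}$. The case $q=d$ is trivial (the hypothesis then reads $\|\cH\|\ge \epsilon\binom{n}{d}$, and the conclusion is the same inequality with $\delta=\epsilon$), so assume $q>d$.

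\textbf{Step 1: a regular partition.} Apply the hypergraph regularity lemma to $\cH$ with regularity parameters chosen sufficiently small in terms of $\epsilon,d,q$ (to be pinned down in Step 3). For $d=2$ this is Szemer\'edi's regularity lemma, producing an equitable vertex partition $V(\cH)=V_1\cup\cdots\cup V_M$ in which all but an $o(1)$-fraction of pairs $(V_i,V_j)$ induce a regular bipartite graph. For $d\ge 3$ one needs the full hierarchical version: a vertex partition together with, for each $2\le \ell\le d-1$, a partition of the $\ell$-element subsets of $V(\cH)$ that refines the structure at level $\ell-1$ and is regular with respect to it, and finally a statement that $\cH$ itself is regular, relative to the level-$(d-1)$ partition, on all but a negligible fraction of ``polyads''. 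At every level the number of parts is bounded by a function of the parameters alone.

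\textbf{Step 2: cleaning, and Step 3: a surviving clique plus counting.} Delete from $\cH$ every hyperedge lying on a polyad that is not regular, or has $d$-density below $\epsilon/2$, or is built from lower-order cells that are themselves too small or not regular; a routine double count bounds the number of deleted hyperedges by $\epsilon\binom{n}{d}$ if the Step~1 parameters were small enough. Let $\cH'$ be the result. By hypothesis, deleting fewer than $\epsilon\binom{n}{d}$ hyperedges cannot destroy all $q$-cliques of $\cH$, so $\cH'$ still contains one; its $q$ vertices lie in cells $V_{i_1},\dots,V_{i_q}$, which we may take distinct (the negligibly few cliques with a repeated cell are absorbed into the cleaning), and the polyads they span are regular with all $d$-densities at least $\epsilon/2$. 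The counting lemma then gives at least $c\,|V_{i_1}|\cdots|V_{i_q}|$ copies of $K_q^{(d)}$ with one vertex in each $V_{i_j}$, for some $c=c(\epsilon,d,q)>0$; since each cell has size $\Theta(n/M)$ with $M$ bounded in terms of $\epsilon,d,q$, this is $\Omega_{\epsilon,d,q}(n^q)$, hence at least $\delta\binom{n}{q}$ for a suitable $\delta=\delta(\epsilon,d,q)>0$, and the lemma follows.

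\textbf{The main obstacle.} For $d=2$ the argument above is precisely the Ruzsa--Szemer\'edi proof of the triangle removal lemma, and every step is elementary once Szemer\'edi's lemma and the triangle counting lemma are available. The genuine difficulty for $d\ge 3$ --- the reason the body of the paper quotes \cite{NRG,G} as a black box rather than reproving this --- is that the naive $\ell$-set analogue of graph regularity is too weak to support \emph{any} counting lemma, so one must first isolate the correct hierarchical notion of regularity under which a matching embedding lemma actually holds; both the resulting regularity lemma and its counting lemma are substantial theorems in their own right.
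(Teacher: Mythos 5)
The paper offers no proof of this lemma --- it is quoted as a black box from Nagle--R\"odl--Schacht and Gowers --- and your sketch is precisely the standard regularity-method argument from those sources (regularize, clean irregular or sparse polyads, find a surviving clique, apply the counting lemma), so it follows the intended route and is sound at the level of detail given. One small point: the hypothesis only guarantees a surviving $q$-clique when \emph{fewer} than $\epsilon\binom{n}{d}$ hyperedges are deleted, so the cleaning step must remove strictly fewer than $\epsilon\binom{n}{d}$ edges; your $\epsilon/2$-type thresholds and suitably small regularity parameters supply exactly that slack.
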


\begin{lemma}[K\H{o}vari-S\'os-Tur\'an Theorem, \cite{KST}] \label{thm:KST}
If $G=(Y_1, Y_2; E)$ is a bipartite graph, such that $G$ doesn't contain a complete bipartite subgraph with $y_1$ vertices in $Y_1$ and $y_2$ vertices in $Y_2$, then 
$||G||\leq (y_2-1)^{1/y_1}(|Y_1|-y_1+1)|Y_2|^{1-1/y_1}+(y_1-1)|Y_2|$.
\end{lemma}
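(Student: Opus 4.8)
The statement to prove is the Kővári–Sós–Turán bound: for a bipartite graph $G = (Y_1, Y_2; E)$ containing no complete bipartite subgraph with $y_1$ vertices in $Y_1$ and $y_2$ vertices in $Y_2$, one has $\|G\| \leq (y_2-1)^{1/y_1}(|Y_1|-y_1+1)|Y_2|^{1-1/y_1} + (y_1-1)|Y_2|$. The plan is to run the standard double-counting argument on "cherries," i.e.\ on $y_1$-element subsets of $Y_1$ together with a common neighbor in $Y_2$. Write $|Y_1| = m$, $|Y_2| = N$, and for each $v \in Y_2$ let $d(v)$ denote its degree. First I would count the number of pairs $(S, v)$ where $v \in Y_2$ and $S \in \binom{N(v)}{y_1}$, where $N(v) \subseteq Y_1$ is the neighborhood of $v$. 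On one hand this count is $\sum_{v \in Y_2} \binom{d(v)}{y_1}$. On the other hand, since $G$ contains no $K_{y_1, y_2}$ with the $y_1$-side in $Y_1$, every fixed set $S \in \binom{Y_1}{y_1}$ has at most $y_2 - 1$ common neighbors in $Y_2$; hence the count is at most $(y_2-1)\binom{m}{y_1}$.

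Combining these gives $\sum_{v \in Y_2} \binom{d(v)}{y_1} \leq (y_2-1)\binom{m}{y_1}$. The next step is convexity: the function $x \mapsto \binom{x}{y_1}$ (extended to reals as $x(x-1)\cdots(x-y_1+1)/y_1!$ for $x \geq y_1 - 1$, and as $0$ below) is convex on $[y_1-1, \infty)$. To handle vertices of small degree cleanly, I would split $Y_2$ into $Y_2' = \{v : d(v) \geq y_1 - 1\}$ and its complement; vertices in the complement contribute at most $(y_1 - 2)N < (y_1-1)N$ edges in total and can be absorbed into the error term, so it suffices to bound the edges incident to $Y_2'$. Applying Jensen's inequality to the convex function over $Y_2'$, with $\bar d$ the average degree over $Y_2'$, yields $|Y_2'|\binom{\bar d}{y_1} \leq (y_2-1)\binom{m}{y_1}$, and since $|Y_2'| \leq N$ we get $\binom{\bar d}{y_1} \leq (y_2-1)\binom{m}{y_1}/N \le (y_2-1)\binom{m}{y_1}/|Y_2'|$.

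From here it is a matter of unwinding the binomial coefficients. Using $\binom{\bar d}{y_1} \geq ((\bar d - y_1 + 1)/y_1)^{y_1} \cdot (\text{something})$ — more precisely, $\binom{\bar d}{y_1} \geq (\bar d - y_1 + 1)^{y_1}/y_1!$ and $\binom{m}{y_1} \leq m^{y_1}/y_1!$ would be too lossy, so instead I would use the sharper estimate $\binom{m}{y_1} \leq (m - y_1 + 1) \cdot m^{y_1 - 1}/y_1! \cdot (\text{adjusted})$; the cleanest route giving exactly the stated constant is to write $\binom{\bar d}{y_1}\big/\binom{m}{y_1}$ and bound it below by $\prod_{i=0}^{y_1-1}\frac{\bar d - i}{m - i} \geq \left(\frac{\bar d - y_1 + 1}{m - y_1 + 1}\right)\left(\frac{\bar d}{m}\right)^{y_1 - 1}$ after checking the ratios are increasing in $i$ appropriately, or — following the classical derivation — to simply estimate $(\bar d)_{y_1} = \bar d(\bar d - 1)\cdots(\bar d - y_1 + 1) \le \bar d^{y_1}$ and then solve for $\bar d$, obtaining $\bar d \leq (y_2 - 1)^{1/y_1} N^{-1/y_1} ((m)_{y_1})^{1/y_1} + y_1 - 1 \leq (y_2-1)^{1/y_1}(m - y_1 + 1)^{1/y_1} m^{(y_1-1)/y_1} N^{-1/y_1} + y_1 - 1$, using $(m)_{y_1} \le (m-y_1+1)m^{y_1-1}$. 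Multiplying through by $|Y_2'| \le N$ to recover $\|G[Y_1, Y_2']\| = |Y_2'|\bar d \leq N \bar d$ and adding back the at most $(y_1-1)N$ edges from low-degree vertices gives the claimed bound $\|G\| \leq (y_2-1)^{1/y_1}(|Y_1| - y_1 + 1)|Y_2|^{1 - 1/y_1} + (y_1 - 1)|Y_2|$.

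\textbf{Main obstacle.} The genuinely delicate point is not the double counting but matching the \emph{exact} constants in the final inequality: one must be careful to use $(m)_{y_1} \le (m - y_1 + 1)\,m^{y_1 - 1}$ rather than the cruder $m^{y_1}$, to handle the vertices of degree below $y_1 - 1$ by a separate linear bound folded into the $(y_1 - 1)|Y_2|$ term, and to track where $|Y_2'|$ versus $|Y_2| = N$ is used so that the convexity step is not applied to a quantity that could be vacuous. The rest — Jensen's inequality and the passage from $\bar d$ to $\|G\|$ — is routine once the bookkeeping is set up correctly.
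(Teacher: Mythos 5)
The paper itself gives no proof of Lemma \ref{thm:KST}; it is quoted directly from \cite{KST}, so your argument has to be judged on whether it delivers the bound exactly as stated. Your skeleton is the standard one (count pairs $(S,v)$ with $S\in\binom{Y_1}{y_1}$, $S\subseteq N(v)$, use that each $S$ has at most $y_2-1$ common neighbours, then convexity), and your treatment of the low-degree vertices is fine in principle. The gap is precisely at the step you yourself flag as the delicate one: neither of your two unwinding routes gives the stated constants. The inequality $\prod_{i=0}^{y_1-1}\frac{\bar d-i}{m-i}\ \ge\ \frac{\bar d-y_1+1}{m-y_1+1}\bigl(\frac{\bar d}{m}\bigr)^{y_1-1}$ is false in general: for $\bar d\le m$ the ratios $\frac{\bar d-i}{m-i}$ are \emph{decreasing} in $i$, not increasing (take $y_1=3$, $\bar d=3$, $m=6$: the left side is $\tfrac12\cdot\tfrac25\cdot\tfrac14=\tfrac1{20}$, the right side is $\tfrac14\cdot\tfrac14=\tfrac1{16}$). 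Your second route (besides the direction slip in ``$(\bar d)_{y_1}\le\bar d^{\,y_1}$''; what is needed is the lower bound $(\bar d)_{y_1}\ge(\bar d-y_1+1)^{y_1}$) combines $(\bar d-y_1+1)^{y_1}\le\frac{y_2-1}{N}(m)_{y_1}$ with $(m)_{y_1}\le(m-y_1+1)m^{y_1-1}$ and therefore produces the main term $(y_2-1)^{1/y_1}(m-y_1+1)^{1/y_1}m^{(y_1-1)/y_1}|Y_2|^{1-1/y_1}$, which is \emph{larger} than the stated $(y_2-1)^{1/y_1}(m-y_1+1)|Y_2|^{1-1/y_1}$, since $(m-y_1+1)^{1/y_1}m^{(y_1-1)/y_1}\ge m-y_1+1$. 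So as written you prove a weaker inequality than the lemma claims.

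The repair is small and standard: from $|Y_2|\binom{\bar d}{y_1}\le(y_2-1)\binom{m}{y_1}$ bound the ratio factorwise using that each factor satisfies $\frac{\bar d-i}{m-i}\ge\frac{\bar d-y_1+1}{m-y_1+1}$ when $\bar d\le m$, so that $\binom{\bar d}{y_1}\big/\binom{m}{y_1}\ge\bigl(\frac{\bar d-y_1+1}{m-y_1+1}\bigr)^{y_1}$; solving gives $\bar d\le y_1-1+(y_2-1)^{1/y_1}|Y_2|^{-1/y_1}(m-y_1+1)$, and multiplying by $|Y_2|$ yields exactly the claimed bound. Two smaller bookkeeping points: the step ``$\binom{\bar d}{y_1}\le(y_2-1)\binom{m}{y_1}/N$'' does not follow from $|Y_2'|\le N$ (that substitution goes the wrong way --- keep $|Y_2'|$ until after you solve for $\bar d$, and only then use $|Y_2'|\le|Y_2|$ in the term $|Y_2'|^{1-1/y_1}$), and the phrase ``ratios are increasing in $i$'' should be ``decreasing''. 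Alternatively, applying Jensen over all of $Y_2$ with the convex extension of $\binom{x}{y_1}$ (equal to $0$ for $x\le y_1-1$) removes the need for the split into $Y_2'$ and its complement altogether: either $e/|Y_2|\le y_1-1$ and the bound is trivial, or the computation above applies directly.
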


 The following lemma appears as a Proposition 2.7 in Jiang and Seiver paper \cite{JS}, see a similar earlier result by Erd\H{o}s and Simonovits \cite{ES}. The authors do not state that the resulting subgraph is induced. However, the proof of their proposition implies that the subgraph is induced since it is obtained by an iterative vertex deletion procedure.

\begin{lemma} [Reduction Lemma] \label{lem:reduction}
 For every $\epsilon \in (0, 1)$ there is $K > 0$ such that for every $c \geq 1$ there is an integer $N \in \NN$ such that for every integer $n \geq N$ and every $n$-vertex graph $G$ with $||G|| \geq c n ^ { 1 + \epsilon }$
 there is an induced subgraph $G' \subseteq G$ on $m \geq n ^ { \frac{ \epsilon }{ 2 } \frac{ 1 - \epsilon }{ 1 + \epsilon } }$ vertices such that $||G'||\geq \frac{2c}{5}m^{1+\epsilon}$ and $G'$ is $K$-almost regular, for $K= K(\epsilon)$.
\end{lemma}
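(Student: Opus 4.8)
\emph{Plan.} This is the classical Erd\H{o}s--Simonovits ``almost-regularization'' lemma, in the quantitative form recorded by Jiang and Seiver; as the surrounding text notes, it is obtained by an iterated vertex-deletion procedure, which is exactly why the output $G'$ comes out as an \emph{induced} subgraph. I would split the argument into two phases.

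\emph{Phase 1 (cleaning low degrees).} Starting from $G$ with $||G||\ge c n^{1+\epsilon}$, repeatedly delete a vertex whose current degree is below $\tfrac{c}{4}n^{\epsilon}$, the threshold being frozen at the original value of $n$. At most $n$ vertices are deleted and each removes fewer than $\tfrac{c}{4}n^{\epsilon}$ edges, so fewer than $\tfrac{c}{4}n^{1+\epsilon}$ edges are lost; the resulting induced subgraph $H$ satisfies $||H||\ge \tfrac{3c}{4}n^{1+\epsilon}$ and $\delta(H)\ge \tfrac{c}{4}n^{\epsilon}$. From $\binom{|H|}{2}\ge ||H||$ we get $|H|\ge c_1 n^{(1+\epsilon)/2}$ for a constant $c_1=c_1(c)$ -- so the vertex count has only dropped by a polynomial factor, which is the slack that pays for the losses to come -- and since $|H|\le n$ we also have $\delta(H)\ge \tfrac{c}{4}|H|^{\epsilon}$.

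\emph{Phase 2 (taming the maximum degree).} Inside $H$ I would run an inner iteration on induced subgraphs that halts once the current graph is $K$-almost-regular for the target $K=K(\epsilon)$. At a step with current graph $H_i$, first re-clean as in Phase~1 so that $\delta(H_i)$ is a fixed fraction of the average degree $||H_i||/|H_i|$ times two; if $\Delta(H_i)\le K\delta(H_i)$ we stop, and otherwise a few atypically high-degree vertices are present. Bucket the vertices into dyadic degree-classes $D_j=\{v:2^j\le \deg_{H_i}(v)<2^{j+1}\}$; since all degrees lie between $\delta(H_i)$ and $|H_i|$ there are only $O_{\epsilon,c}(\log|H_i|)$ nonempty classes, and a double-counting argument (charging each edge to the class of its higher-degree endpoint) pinpoints a scale $2^{j^*}$, comparable to $||H_i||/|H_i|$, on which the edges concentrate. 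If a tiny high-degree part carries most of the edges (the ``top-heavy'' case, whose model is an unbalanced complete bipartite graph), pass to the induced subgraph on those vertices \emph{together with} a uniformly random equal-sized subset of a low-degree class -- a routine concentration estimate makes this a balanced, essentially regular induced subgraph retaining a polynomial fraction of the edges; this step can recur only boundedly often, since it multiplies the relative density $||\cdot||/|\cdot|^{1+\epsilon}$ by a large factor while that quantity is always at most $\tfrac12|\cdot|^{1-\epsilon}$. Otherwise the edges live on a bounded-width window of degree classes around $2^{j^*}$, and restricting to that window and re-cleaning produces a $K(\epsilon)$-almost-regular induced subgraph directly. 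Since every round costs only a polynomial (in fact poly-logarithmic times a constant) factor in vertices and in edges, feeding $|H|\ge c_1 n^{(1+\epsilon)/2}$ through the accounting yields $|G'|\ge n^{\frac{\epsilon}{2}\cdot\frac{1-\epsilon}{1+\epsilon}}$ and $||G'||\ge \tfrac{2c}{5}|G'|^{1+\epsilon}$.

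\emph{Main obstacle.} The hard part is precisely this maximum-degree cap with $K$ forced to depend on $\epsilon$ alone. One cannot simply delete the high-degree vertices, since they may be incident to almost every edge (the unbalanced complete bipartite graph again), so one is obliged to \emph{restrict} to a carefully chosen degree band and, in the unbalanced regime, to rebalance the two sides by random subsampling inside the classes -- all while preserving induced-ness and keeping the edge density from decaying below a constant. Choosing the buckets so that a single round both floors and caps the degrees, bounding the number of rebalancing recursions, and tracking the multiplicative constants tightly enough to land exactly at $\tfrac{2c}{5}$ (rather than merely at some constant depending on $c$) is where the real work lies.
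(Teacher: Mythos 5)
A preliminary remark: the paper does not actually prove this lemma. It invokes Proposition 2.7 of Jiang and Seiver (going back to Erd\H{o}s--Simonovits), adding only the observation that their iterative vertex-deletion procedure automatically returns an \emph{induced} subgraph. You identify the provenance and the induced-ness mechanism correctly, and your Phase 1 (iteratively deleting vertices of current degree below $\tfrac{c}{4}n^{\epsilon}$, losing under $\tfrac{c}{4}n^{1+\epsilon}$ edges and retaining at least on the order of $\sqrt{c}\,n^{(1+\epsilon)/2}$ vertices) is the standard opening move and is fine.

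Phase 2, however, is where the entire difficulty of the lemma lives, and as written it has genuine gaps. (i) The ``window'' case does not work as described: charging an edge to the dyadic class of its higher-degree endpoint says nothing about its other endpoint, so the subgraph induced on a band of degree classes may contain almost none of the edges charged to that band; moreover a single class captures only a $1/\Theta(\log n)$ fraction of the edges, and when the band contains most of the vertices (no polynomial drop in order) this logarithmic loss cannot be recouped, leaving $||G'||\geq \tfrac{c}{\mathrm{polylog}(n)}\,m^{1+\epsilon}$ rather than the required $\tfrac{2c}{5}m^{1+\epsilon}$, which must be a fixed constant fraction of $c$. (ii) In the ``top-heavy'' case, the assertion that the high-degree vertices together with a random equal-sized sample of a low-degree class form an essentially regular induced subgraph is unjustified: dyadic classes control total degrees in $H_i$, not the bipartite degrees between the two chosen sets, so degrees in the sampled graph can vary by an unbounded factor, and sampled low-degree vertices may have almost no neighbours in the high-degree set at all. (iii) The termination argument is too weak: if each recursion only multiplies $||\cdot||/|\cdot|^{1+\epsilon}$ by a constant factor, the cap $\tfrac12|\cdot|^{1-\epsilon}$ bounds the depth only by $O(\log n)$, and since each recursion also lowers the order (roughly $v\mapsto v^{\theta}$), the final order could fall far below $n^{\frac{\epsilon(1-\epsilon)}{2(1+\epsilon)}}$; one needs every recursion to boost the density ratio by a factor \emph{polynomial} in the current order, so that the depth is bounded in terms of $\epsilon$ alone. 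This is, in essence, what the Erd\H{o}s--Simonovits/Jiang--Seiver argument arranges: the dichotomy there is not a degree-bucket decomposition but ``either some vertex subset of order about $v^{\theta}$ spans polynomially more than its proportional share of edges (recurse, with a polynomial density gain), or no such dense small subset exists, and that very hypothesis caps the maximum degree after the low-degree cleaning, producing the $K(\epsilon)$-almost-regular induced subgraph with only a constant-factor edge loss.'' Without this ingredient (or an equivalent replacement), your sketch does not establish the stated vertex-count and $\tfrac{2c}{5}$ bounds; the cleanest fix is simply to do what the paper does and quote Jiang--Seiver, noting induced-ness.
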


The following lemma is a classical application of the probablilistic deletion method that is mentioned in particular in Theorem 2.26 of a survey by F\"uredi and Simonovits \cite{FS}.
\begin{lemma}[Deletion-method Lemma]\label{lem:deletion}
Let $\cH$ be a family of non-empty graphs, then there are positive constants $c$ and $n_0$ such that for any $n>n_0$ there is a bipartite graph $G$ with $n$ vertices in each part, not containing any member of $\cH$ as a subgraph and such that $||G||\geq cn^{2-\gamma}$, where $\gamma(\cH) =\max \{\gamma(H): H\in \cH\}$ and $\gamma(H)= \frac{|H|-2}{||H||-1}$.
\end{lemma}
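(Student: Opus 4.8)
The plan is to run the probabilistic deletion method directly inside $K_{n,n}$. We assume throughout that $\cH$ is finite and that every $H\in\cH$ has at least two edges (a single-edge member forces the extremal graph to be edgeless, making the claim trivial). Write $\gamma=\gamma(\cH)$, so $\gamma\ge 0$. First I would fix a small constant $\lambda\in(0,1)$, to be pinned down at the very end, and let $\Gamma=(A,B;E)$ be the random bipartite graph with $|A|=|B|=n$ in which each of the $n^2$ possible edges is present independently with probability $p=\lambda n^{-\gamma}\le 1$.

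Next comes a first-moment estimate. Here $\mathbb{E}[\,||\Gamma||\,]=pn^2=\lambda n^{2-\gamma}$, while for each $H\in\cH$ every copy of $H$ in $K_{n,n}$ arises from an injective map $V(H)\to A\cup B$ (and there is none when $H$ is not bipartite), so $K_{n,n}$ has at most $(2n)^{|H|}$ such copies; since a fixed copy survives in $\Gamma$ with probability $p^{||H||}$, the expected number $X_H$ of copies of $H$ in $\Gamma$ satisfies
\[
\mathbb{E}[X_H]\ \le\ 2^{|H|}\,n^{|H|}\,p^{||H||}\ =\ 2^{|H|}\lambda^{||H||}\,n^{\,|H|-\gamma||H||}.
\]
The exponent here obeys $|H|-\gamma||H||\le 2-\gamma$: the difference of the two sides equals $(|H|-2)-\gamma(||H||-1)\le 0$, since $\gamma=\gamma(\cH)\ge\gamma(H)=\tfrac{|H|-2}{||H||-1}$. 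Hence $\mathbb{E}[X_H]\le 2^{|H|}\lambda^{||H||}n^{2-\gamma}$ for every $H\in\cH$.

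The main obstacle is that for a member $H^\ast$ attaining $\gamma(\cH)$ this exponent equals $2-\gamma$ exactly, so $\mathbb{E}[X_{H^\ast}]$ is of the same order $n^{2-\gamma}$ as $\mathbb{E}[\,||\Gamma||\,]$ and cannot be written off as a lower-order correction. The way around this is to exploit the leading constant: the edge count scales like $\lambda$ whereas each $\mathbb{E}[X_H]$ scales like $\lambda^{||H||}$ with $||H||\ge 2$, so since $\cH$ is finite one may fix $\lambda$ small enough (depending only on $\cH$) that $\sum_{H\in\cH}2^{|H|}\lambda^{||H||-1}\le\tfrac14$, and therefore $\sum_{H\in\cH}\mathbb{E}[X_H]\le\tfrac14\lambda n^{2-\gamma}$.

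To conclude, I would delete from $\Gamma$ one edge out of each copy of each member of $\cH$; the resulting graph $G$ is bipartite with $n$ vertices in each part, contains no member of $\cH$ as a subgraph, and satisfies $||G||\ge ||\Gamma||-\sum_{H\in\cH}X_H$ pointwise. Taking expectations gives $\mathbb{E}[\,||G||\,]\ge\lambda n^{2-\gamma}-\tfrac14\lambda n^{2-\gamma}=\tfrac34\lambda n^{2-\gamma}$, so some realisation of $G$ has at least $\tfrac34\lambda n^{2-\gamma}$ edges, and one sets $c=\tfrac34\lambda$. Every step apart from tuning $\lambda$ is routine; it is precisely because the extremal member of $\cH$ contributes at the same order as the edge count that this method yields only the bound $\Omega(n^{2-\gamma})$ with an explicit small constant rather than an asymptotically sharp one.
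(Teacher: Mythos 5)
Your argument is correct and is exactly the classical probabilistic deletion-method proof that the paper invokes by citation (Füredi--Simonovits, Theorem 2.26) without reproducing it: random bipartite $\Gamma\subseteq K_{n,n}$ with $p=\lambda n^{-\gamma}$, a first-moment bound $\mathbb{E}[X_H]\le 2^{|H|}\lambda^{||H||}n^{2-\gamma}$ using $\gamma\ge\gamma(H)$, tuning $\lambda$ so the copy count is a small fraction of the edge count, and deleting one edge per copy. Your added hypotheses (finiteness of $\cH$ and $||H||\ge 2$, without which $\gamma(H)$ is not even well defined) are implicit in the statement and harmless for the paper's only application, where $\cH=\{K_{t,t},W(k,d,r)\}$.
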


\noindent
\begin{lemma}\label{lem:propertiesW}
 Let $d$, $r$, and $n$ be non-negative integers, $d\geq r+2$. Then there is an integer $k$ such that the following holds. 
 \begin{enumerate}
 \item{}\label{1}
 If $G'=(V', U'; E)$ is a bipartite graph such that each vertex from $U'$ has degree at most $d$ and there is no copy of a complete bipartite graph with $d-r$ vertices in $U'$ and $d$ vertices in $V'$, then for sufficiently large $k$, $H=H(k, d, d-r-1)$ and thus $W=W(k, d, r)$ contain an induced copy of $G'$ with set of vertices corresponding to $V'$ contained in the body of $H$ and in the body of $W$, respectively.\\
 \item{}\label{2}
 If $d\geq 3$ and $G$ is an incidence graph of the hypergraph with vertex set $[d+1]$ and edge set $2^{[d+1]}$, then $W=W(k, d, d-2)$ contains an induced copy of $G$ with the copy of $[d+1]$ contained in the body of $W$.\\
 \item{}\label{3}
 Let $s, k, d \in \mathbb{N}$ and $G=(V, U; E)$ be a bipartite graph with $|V|=k$, such that for each $d$-element subset $X$ of $V$ and each $y\in V\setminus X: \ |N(y)\cap N(X)|< (|N(X)|-s)/(k-d)$.
 Then $H(k, d, s)$ is an induced subgraph of $G$ with body $V$. 
 \end{enumerate}
\end{lemma}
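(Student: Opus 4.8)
All three parts are handled by exhibiting an explicit induced copy of the target graph inside $W(k,d,r)$ (or inside the hedgehog $H(k,d,j)$), with $k$ chosen large only at the end; for Part~1 this is in terms of $d,r$ and the bound $n$ on $|V(G')|$, for Part~2 in terms of $d$, while Part~3 needs no largeness beyond $k>d$. For Part~1 I would first fix an injection $\phi\colon V'\to V$ into the body $V$ of $H=H(k,d,d-r-1)$, possible once $k\ge |V'|$, and set $V''=\phi(V')$. Each $u\in U'$ must then be routed to a vertex of $U$, and I split according to $\deg_{G'}(u)$. If $\deg_{G'}(u)=d$, then $N_{G'}(u)$ is a $d$-set $S$; I take $E:=\phi(S)\subseteq V''$ and place $u$ into the bundle $U_E$. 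The hypothesis that $G'$ has no complete bipartite subgraph with $d-r$ vertices in $U'$ and $d$ in $V'$ says that each $d$-subset of $V'$ has at most $d-r-1$ common neighbours in $U'$, so the vertices routed to any fixed $U_E$ fit into its $d-r-1$ slots. If $\deg_{G'}(u)=d-q$ with $q\ge 1$, I enlarge $\phi(N_{G'}(u))$ by $q$ so-far-unused vertices of $V\setminus V''$ to a $d$-set $E$ and place $u$ into $U_E$; doing this one vertex at a time and never reusing an enlargement vertex gives a distinct $E$ for each, available once $k\ge |V'|+d|U'|$. Since each vertex of $U_E$ in $H$ is adjacent to exactly $E$, and $E\cap V''=\phi(N_{G'}(u))$ in every case, $\phi$ together with this routing embeds $G'$ so that its image induces exactly $G'$ ($V'$ and $U'$ and their images are independent, and the cross-edges match by construction). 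Finally $W(k,d,r)$ is $H$ with only edges incident to the disjoint set $Y$ added, whereas the image lies in $V\cup U$, so the same vertex set induces $G'$ in $W(k,d,r)$, with body image inside $V$.

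For Part~2, let $G$ be the incidence graph of $([d+1],2^{[d+1]})$ with sides $V'=[d+1]$ and $U'=2^{[d+1]}$, and let $G_0$ be $G$ with the single vertex $[d+1]\in U'$ deleted. In $G_0$ every $U'$-vertex has degree at most $d$, and no two distinct $U'$-vertices share $d$ common neighbours, since any $d$-subset $T$ of $[d+1]$ has only $T$ and $[d+1]$ as supersets and $[d+1]\notin V(G_0)$; thus $G_0$ satisfies the hypothesis of Part~1 with $d-r=2$, i.e.\ $r=d-2$ and $d-r-1=1$. So $W=W(k,d,d-2)$ contains an induced copy of $G_0$ with $[d+1]$ in the body $V$ and not using $Y$. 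I then map the deleted vertex $[d+1]$ to any $y\in Y$, nonempty because $|Y|=d-2\ge 1$ as $d\ge 3$: in $W$ the vertex $y$ is adjacent to all of $V$, hence to the whole image of $[d+1]$, and to no vertex of $U$, hence to no vertex of the image of $U'\setminus\{[d+1]\}$, which is exactly the adjacency of $[d+1]$ in $G$. Hence the full image induces $G$ with the copy of $[d+1]$ in the body of $W$.

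For Part~3 I would argue by counting private neighbours. For a $d$-subset $X$ of $V$ let $N(X)\subseteq U$ be its common neighbourhood and $P_X\subseteq N(X)$ the set of vertices adjacent to exactly $X$. A vertex of $N(X)$ lies outside $P_X$ only if it belongs to some $N(X)\cap N(y)$ with $y\in V\setminus X$; the hypothesis bounds each such intersection by $(|N(X)|-s)/(k-d)$, so summing over the $k-d$ choices of $y$ and using the union bound gives $|N(X)\setminus P_X|<|N(X)|-s$, hence $|P_X|>s$. The $P_X$ are pairwise disjoint because a vertex of $P_X$ has neighbourhood exactly $X$, so choosing an $s$-subset $U_X\subseteq P_X$ for each $X$ makes $V\cup\bigcup_X U_X$ induce precisely $H(k,d,s)$ with body $V$. (If $k=d$ the displayed condition is vacuous, and that case is trivial or simply avoided, since the lemma is applied with $k$ large.)

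The real work is the bookkeeping in Part~1 — verifying that the constructed map gives a genuinely induced copy. This is what forces the case split in $U'$: the full-degree vertices are the delicate ones, and the $K_{d-r,d}$-freeness is used precisely to keep each bundle $U_E$ from overflowing, while the lower-degree vertices go freely into fresh bundles once $k$ is large. Parts~2 and~3 are then short, the only points to watch being $Y\ne\emptyset$ in Part~2 (which is where $d\ge 3$ enters) and the division by $k-d$ in Part~3.
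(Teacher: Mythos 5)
Your proposal is correct and follows essentially the same approach as the paper's proof for all three parts. The only minor difference is in Part~1, where you route the low-degree vertices of $U'$ one at a time using fresh enlargement vertices, whereas the paper first reduces to a maximal $G'$ (a union of complete bipartite stars) and pre-selects, for each small $A\subseteq V'$, a family $\cB_A$ of disjoint $d$-sets from $V\setminus V'$; both are valid ways to keep the bundles distinct and achieve the same embedding.
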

 
\begin{proof}
$\ $
\begin{enumerate}
\item{}
Assume that $G'$ is a union of complete bipartite graphs with parts $A, U_A$, for all $A$ that are subsets of $V'$ of size at most $d$ and $U_A$'s are pairwise disjoint, $|U_A| = d-r-1$ if $|A|=d$ and $|U_A|=q$ for some large, but fixed $q$, say $q=(d-r-1)s$, for some $s$. We shall find an injective map of the vertex set of $G'$ into the vertex set of $H=H(k, d, d-r-1)$.
Let $V$ be the body of $H(k, d, d-r-1)$, assume that $V'\subseteq V$. 
 
For each $A\in \binom{V'}{d}$, we embed $U_A$ into the neighbourhood of $A$ in $H$. 
Consider the family $\cA$ of sets $A\subseteq V'$, $|A|<d$.
Let us choose for each $A\in \cA$ a set $\cB_A$ of $s$ pairwise disjoint sets from $V\setminus V'$, each of size $d$. This is possible if we choose $k$ large enough.
For each $A\in \cA$, let $\cB'_A= \{ B': B'\subseteq B, B\in \cB_A, |B'| = d-|A|\}$. 
For each member of $B'\in \cB'_A$ there is a set $U_{B'}$ of size $d-r-1$ in $H$ such that $U_{B'} \sim (B'\cup A)$ in $H$. Moreover, the $U_{B'}$'s are pairwise disjoint.
Thus their union is a set of $s(d-r-1)\binom{ d }{ |A| } \geq q$ vertices, each adjacent to every vertex of $A$. We embed the neighbourhood of $A$ in $G'$ into $\cup_{B'\in \cB'_A} U_{B'}$. 

\item{}
Note that $G$ has parts $[d+1]$ and $X= 2^{[d+1]}$. Let $G'$ be obtained by deleting the vertex $x$ of degree $d+1$ from $X$. Then $G'$ satisfies the conditions of the first part of the lemma with $r=d-2$ and $r\geq 1$ (here we use the fact that $d\geq 3$).
 So, $G'$ is an induced subgraph of $H=H(k, d, 1)$. We have that $W=W(k, d, d-2)$ is a supergraph of $H$ with a vertex, say $v$, not in $V(H)$ and adjacent to all vertices of the body of $H$. Then we embed $x$ in $v$ to obtain the copy of $G$ in $W$. 

\item{}
We greedily find sets $U_X\subseteq U$, for each $X\in \binom{V}{d}$, such that they are pairwise disjoint, of size $s$ each, such that $U_X\sim X$ and 
 $U_X \not\sim x$ for any $x\in V\setminus X$.
 For a fixed $X\in \binom{V}{d}$, we can ignore all neighborhoods $N(y)$, $y\in V\setminus X$, from $N(X)$. Then we have at least $|N(X)| - (k-d) (|N(X)|-s)/(k-d) \geq s$ vertices such that each of them is adjacent to all of $X$ and to none of vertices from $V\setminus X$. Choose a subset of size exactly $s$ from this set arbitrarily and call it $U_X$.
 We see that if $X\neq X'$, $X, X'\in \binom{V}{d}$, then $U_X\cap U_{X'}= \emptyset$, otherwise if $z\in U_X\cap U_{X'}$, by construction of $U_X$ and $U_{X'}$, $z\not\sim x, z\not\sim x'$, for $x\in X\setminus X'$ and $x' \in X'\setminus X$, but $z\sim X$ and $z\sim X'$, a contradiction. 
 In addition, by construction, for any $x'\in X'\setminus X$, $x'$ is not adjacent to any vertex from $U_{X}$. 
 Thus, $V$ and the sets $U_X$, $X\in \binom{V}{d}$ form a vertex set of an induced copy of $H(k, d, s)$ in $G$, with body $V$. 
 \end{enumerate}
\end{proof}

 \section{Proofs of the main results}\label{sec:main-proofs}
 
 The following result is a main tool in proving Theorem \ref{thm:main}.

\begin{proposition}\label{prop}
Let $k, d,t,s$ be positive integers, $d\leq k$. For every $ \eta> 0$ there are $ \xi, \kappa, n_0 > 0$ such that for any $K_{t,t}$-free bipartite graph $G=(A, B; E)$,
where $|B|\geq n_0$, $|A| \leq \xi |B| ^ { \frac{ s }{ d } }$, $||G|| \geq \xi \eta |B| ^ { \frac { d + s - 1 }{ d } }$, and any vertex in $B$ has degree at most $\kappa |A|$, 
there is an induced copy of the hedgehog $H(k, d, s)$ with body in $A$.
\end{proposition}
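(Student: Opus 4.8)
My plan is to manufacture the body of the hedgehog as a $k$-element set $V\subseteq A$ and then quote part~(\ref{3}) of Lemma~\ref{lem:propertiesW}. It therefore suffices to produce $V\subseteq A$ with $|V|=k$ and a number $\rho\ge 0$ such that every $X\in\binom{V}{d}$ has $|N_G(X)|> s+(k-d)\rho$ while every $(d+1)$-element subset of $V$ has codegree at most $\rho$; then for each such $X$ and each $y\in V\setminus X$ one has $|N_G(y)\cap N_G(X)|\le\rho<(|N_G(X)|-s)/(k-d)$, which is exactly the hypothesis of Lemma~\ref{lem:propertiesW}(\ref{3}), so an induced copy of $H(k,d,s)$ with body in $A$ follows. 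Here $K_{t,t}$-freeness is a first source of control: by Lemma~\ref{thm:KST} every subset of $A$ of size at least $t$ has codegree at most $t-1$, so if $d+1\ge t$ one may even take $\rho=t-1$; in general $\rho$ is a parameter the argument must keep small.

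The quantitative input will come from two star-counts. On one hand, convexity applied to $\sum_{b\in B}\binom{\deg_G(b)}{d}$ together with $\|G\|\ge\xi\eta|B|^{(d+s-1)/d}$ shows that the number of pairs $(X,b)$ with $X\in\binom{A}{d}$ and $X\subseteq N_G(b)$ is $\Omega\big((\xi\eta)^d|B|^{s}\big)$, and since $|A|\le\xi|B|^{s/d}$ this is a positive fraction of $\binom{|A|}{d}$, so large-codegree $d$-sets are abundant. On the other hand, the degree bound $\deg_G(b)\le\kappa|A|$ gives $\sum_{b}\binom{\deg_G(b)}{d+1}\le\frac{\kappa|A|}{d+1}\sum_{b}\binom{\deg_G(b)}{d}$, so the total codegree mass collapses by a factor of order $\kappa|A|$ when passing from $d$-sets to $(d+1)$-sets; taking $\kappa$ small (depending on $\eta,d,s,k$) makes the "bad" $(d+1)$-sets, those whose codegree exceeds the threshold we want, rare.

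The crux is to upgrade these averaged facts to a single clean body. I would first \emph{localize}: by a dependent-random-choice step (intersecting the neighbourhoods of a random bounded-size subset of $B$, together with restricting to the neighbourhood of a high-degree vertex of $B$) pass to a sub-part of $A$ on which the relevant $d$-codegrees are automatically bounded below. On this localized part let $\mathcal{H}$ be the $d$-uniform hypergraph of $d$-sets of large codegree; I would then argue --- and this is the delicate place where the localization must have done real work --- that $\mathcal H$ is $\epsilon$-far from being $K_k^{(d)}$-free, so the Hypergraph Removal Lemma yields $\Omega(|A|^{k})$ copies of $K_k^{(d)}$ in $\mathcal H$, while the rarity of bad $(d+1)$-sets (from the second star-count) forces all but $o(|A|^{k})$ of these copies to avoid every bad $(d+1)$-set. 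Any surviving $k$-clique $V$ has all $d$-codegrees large and all $(d+1)$-codegrees at most $\rho$, and feeding it into Lemma~\ref{lem:propertiesW}(\ref{3}) completes the proof.

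I expect the main obstacle to be exactly this upgrading step: one needs the localized auxiliary hypergraph $\mathcal H$ to be dense enough for the Removal Lemma and simultaneously needs bad $(d+1)$-sets to be rare enough that a clean clique survives, and these two requirements push in opposite directions. Consequently the threshold defining "large codegree", the threshold $\rho$ defining "bad", and the constants $\xi$ and $\kappa$ must be chosen in a tightly interlocked way, and it is precisely here that $K_{t,t}$-freeness (via the K\H{o}v\'ari--S\'os--Tur\'an codegree cap) and the degree bound on $B$ have to be exploited in tandem rather than separately.
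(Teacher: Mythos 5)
Your reduction to Lemma \ref{lem:propertiesW}(3) via a uniform threshold $\rho$ is sound as a sufficient condition, and several of your ingredients (the star counts, the Removal Lemma, the roles of $\kappa$ and of K\H{o}v\'ari--S\'os--Tur\'an) do occur in the actual proof; but the step you yourself flag as the main obstacle is exactly where the argument is missing, and the two counts you establish cannot supply it. First, the inequality $\sum_b\binom{\deg b}{d+1}\le\frac{\kappa|A|}{d+1}\sum_b\binom{\deg b}{d}$ only makes bad $(d+1)$-sets rare relative to $\binom{|A|}{d+1}$ if you also know $\sum_b\binom{\deg b}{d}=O\bigl(\tbinom{|A|}{d}\bigr)$, and no such upper bound is available: in the relevant regime $d<t$ (in the applications $t$ is much larger than $d$) nothing caps the $d$-wise codegrees, so the codegree mass can exceed $\binom{|A|}{d}$ by a polynomial factor, and your only KST input (take $\rho=t-1$ when $d+1\ge t$) is vacuous there. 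Second, even global rarity of bad $(d+1)$-sets would not finish the job: the $k$-cliques produced by the Removal Lemma live precisely in the part of $\binom{A}{d}$ where common neighbourhoods are large, which is exactly where $(d+1)$-wise codegrees tend to be large as well, so the bad sets can be concentrated on the cliques you produce. Third, the ``$\epsilon$-far from $K_k^{(d)}$-free'' certificate itself breaks when some $d$-sets have huge common neighbourhoods: such an edge can lie in the cliques inside $N(S)$ for enormously many $s$-sets $S$, so deleting very few edges may destroy every clique you exhibited.

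This is precisely why the paper runs a dichotomy for which your sketch has no analogue. It colours the $d$-sets of codegree at least $s$ red or blue according to whether $|N(X)|\ge q$ for a large constant $q$. A red $q$-clique is handled by a separate argument: KST shows that for each $X$ only a bounded number of vertices $y$ satisfy $|N(y)\cap N(X)|\ge(|N(X)|-s)/(k-d)$, and a counting argument over $k$-subsets then produces a body satisfying the \emph{relative} hypothesis of Lemma \ref{lem:propertiesW}(3) --- a condition your absolute threshold $\rho$ cannot express, since in this regime both the $d$- and the $(d+1)$-codegrees may be unboundedly large. If there is no red $q$-clique, Ramsey numbers extract many disjoint \emph{blue} $q$-cliques inside each $N(S)$ (blueness caps $|N(X)|$ below $q$, which is what guarantees that each edge meets only $O_q(1)$ of the exhibited cliques, making the far-from-free hypothesis of the Removal Lemma verifiable), the Removal Lemma then yields $\Omega\bigl(\tbinom{|A|}{q}\bigr)$ blue cliques, and the degree bound $\kappa|A|$ is used not to thin out bad $(d+1)$-sets but to upper-count the blue cliques possessing a $B$-vertex adjacent to $d+1$ of their vertices; for small $\kappa$ this contradicts the lower bound unless some blue clique has pairwise disjoint $d$-set neighbourhoods, in which case the induced hedgehog is read off directly. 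To salvage your plan you would need, at a minimum, a separate treatment of the large-codegree regime and a replacement of the uniform $\rho$ by the relative condition of Lemma \ref{lem:propertiesW}(3).
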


\begin{proof}
We shall set the constants as follows. Let $q $ be a sufficiently large constant, $q\gg k$.
We shall use the hypergraph Ramsey number $R= R_{d}(q)$, that is the smallest integer $N$ so that any coloring of $d$-element subsets of $[N]$ in two colors results in a monochromatic $q$-clique.\\

Consider the graph $G$ given in the statement of the proposition, let $|B|=n$.  Let $\cH=(A, \cE)$ be a hypergraph, where $\cE=\{A'\subseteq A: ~ |A'|=d, |N_G(A')|\geq s\}$.
Color the edges of $\cH$ red and blue according to the following rule: if $|N(A')|\geq q$, color $A'$ red, otherwise color it blue.
 We say that a $q$-clique in $\cH$ is blue if all edges of $\cH$ contained in it are blue. Similarly, a $q$-clique is red if all edges of $\cH$ contained in it are red.    We shall show that we can assume every $q$-clique to be blue. After that we count blue $q$-cliques in two different ways and arrive at a contradiction.\\

 {\bf Claim 1.~} If there is a red $q$-clique in $\cH$ then $H(k, d, s)$ is an induced subgraph of $G$ with body in $A$.\\

 \noindent
 Let $Q$ be a red $q$-clique.
 Let $X$ be a $d$-element subset of $Q$. We say that $y\in Q\setminus X$ {\it bad} for $X$ if $|N(y)\cap N(X)|\geq (|N(X)|-s)/(k-d)$. Otherwise, $y$ is {\it good } for $X$.\\

 Consider a $k$-element subset $V$ of $Q$. If for each $d$-element subset $X$ of $V$ and each $y\in V\setminus X$, $y$ is good for $X$, then by Lemma \ref{lem:propertiesW} (3)
 we have that $H(k, d, s)$ is an induced subgraph of $G$ with body $V$, and we are done. \\
 
 So, we can assume that for each $k$-element subset $V$ of $Q$ there is a $d$-element subset $X$ of $V$ and some $y\in V\setminus X$ that is bad for $X$. \\

 Let $X$ be a $d$-element subset of $V$. Let $X'$ be the set of all bad vertices for $X$ from $Q$. 
 Consider $G[X',N(X)]$. Since $G$ is $K_{t, t}$-free, the K\H{o}v\'ari-S\'os-T\'uran bound gives that 
 $||X', N(X)|| \leq (t-1)^{1/t} (|N(X)|-t+1)|X'|^{1-1/t} + (t-1)|X'| \leq c(t) |N(X)| |X'|^{1-1/t}$, for a constant $c(t)$. 
 Thus, $$|X'| \leq \frac{ ||X', N(X)||}{ (|N(X)|-s)/(k-d)} \leq \frac{c(t) |N(X)| \cdot |X'|^{1-1/t}}{ (|N(X)|-s)/(k-d)}.$$ Solving the inequality for $|X'|$, we have that 
 $$|X'| \leq \left(\frac{c(t) |N(X)| (k-d)}{ (|N(X)|-s)}\right)^t \leq c(t, k, d), $$
 for a constant $c(t,k,d)$, when $s<|N(X)|/2$. Since $|N(X)|\geq q$, the above inequality holds in particular when $s<q/2$.\\
 
 Finally, we shall consider $x$, the total number of $k$-element subsets of $Q$. On the one hand, $x = \binom{|Q|}{k} = \binom{q}{k}$. Since each $k$-element subset $V$ of $Q$ contains a subset $X$ of size $d$ and a vertex $y$ that is bad for $X$, 
 $x\leq \binom{q}{d} \cdot c(t,k, d) \cdot \binom{q-d-1}{k-d-1}= O(q^{k-1})$, where the first term is the number of ways to choose $X$ from $Q$, the second one is the number of ways to choose $y$, and the third term is the number of ways to choose the remaining $k-d-1$ elements of $V$. 
 So, we have that $x = o(\binom{q}{k})$, that contradicts the fact that $x = \binom{q}{k}$. This proves Claim 1.\\
 
 \noindent
 Assume from now on that there are no red $q$-cliques in $\cH$.\\

 {\bf Claim 2.~} If $Q$ is a $q$-clique in $\cH$ and for any distinct subsets $X, X'\subseteq Q$ of size $d$ each one has $N(X)\cap N(X') = \emptyset$, then $H(k, d, s)$ is an induced subgraph of $G$.\\
 
 \noindent
 From the definition of $\cH$ it follows that for each $X\subseteq Q$, $|X|=d$, we have $|N(X)|\geq s$.
 Let $B' = \cup_{X\in \binom{Q}{d}} N(X)$. 
 Since $q\geq k$, this immediately implies that $H(k, d, s)$ is a subgraph of $G[Q\cup B']$.
 To show that $H(k, d, s)$ is an induced subgraph of $G[Q\cup B']$, assume otherwise, i.e., that there is a vertex $b\in B'$ such that $b\in N(X)$ and $b\sim a$, for some $a\in Q\setminus X$. Then we see that $b\in N(X) \cap N(X-\{x\}\cup \{a\})$, for any $x\in X$. This contradicts the assumption of the Claim and proves it. \\
 
 {\bf Claim 3.~} The number of blue $q$-cliques in $\cH$ is at most $\kappa \cdot c \binom{|A|}{q}$, for a constant $c=c(q, d)$.\\
 
 By Claim 2, we can assume that for each blue $q$-clique $Q$ there is a vertex $b=b(Q)\in B$ such that $b\in N(X')$, where $X'\subseteq Q$, $|X'|=d+1$. 
 Let us denote some vertex from $X'$ as $a(Q)$. 
 To count blue $q$-cliques, we shall first upper bound the number of subsets of $A$ of size $q-1$ corresponding to $Q-\{a(Q)\}$ and then, for each such a fixed subset, we upper bound the number of ways to extend it to a $q$-set with a vertex corresponding to $a(Q)$. 
 Specifically, we have that the number of $q$-cliques in $\cH$ is at most 
 $$\binom{|A|}{q-1} \cdot \binom{q-1}{d}(q-1) \cdot \kappa |A|, $$
 where the second term $ \binom{q-1}{d}(q-1)$ corresponds to the largest number of vertices in $B_{Q'}= \cup_{X\in \binom{Q'}{d}} N(X)$, for any $Q' \in \binom{A}{q-1}$; 
 and the last term is the largest number of ways to find a neighbour $a$ of a vertex $b\in B_{Q'}$ (here we use the condition on the maximum degree of vertices in $B$ stated in the Proposition).
 For $c(q,d) = \binom{q-1}{d}(q-1)$, Claim 3 follows.\\ 
 
 {\bf Claim 4.~} The number of blue $q$-cliques in $\cH$ is at least $C\binom{|A|}{q}$, where $C=C(q, d, s, \eta)$.\\
 
 \noindent
 Recall that $R $ is the Ramsey number $R_d(q)$ and it is implicit that $R\geq 2q$.
 Consider a set $S\subseteq B$ such that $|S|=s$ and consider $N(S)$. Then we see that $\cH$ restricted to $N(S)$, denoted $\cH[N(S)]$, is a clique. We have that $\cH[N(S)]$ contains at least 
 $ (|N(S)|-2R)/q $ pairwise disjoint monochromatic $q$-cliques. Indeed, iteratively find such a clique as long as the number of vertices is greater than $R$, delete its vertices and proceed. 
 Since there are no red $q$-cliques, we have that the selected $q$-cliques are blue. 
 Let $C(S)$ be a maximal set of pairwise disjoint blue $q$-cliques in $\cH[N(S)]$.
 Let $\cC = \cup_{S\in \binom{B}{s}} C(S)$. We shall argue that the number of blue $q$-cliques is large using the Hypergraph Removal Lemma applied to the $d$-uniform hypergraph $\cH_{\rm blue}$ whose edges are blue edges of $\cH$.
 
 Since the members of $\cC$ are blue, for any blue $q$-clique $Q$, $|N(Q)|\leq q-1$ and in particular there are at most $\binom{q-1}{s}$ ways to choose a set $S$ of size $s$ from $N(Q)$. Thus 
 \begin{equation}\label{C}
 |\cC| \geq \frac{1}{\binom{q-1}{s}} \sum_{S\in \binom{B}{s}} \left(\frac{|N(S)|}{q} - \frac{2R}{q} \right).
 \end{equation}

 Since
 $ \sum_{S\in \binom{B}{s}} |N(S)|$ counts all stars in $G$ with exactly $s$ leaves in $B$, we have that 
 \begin{eqnarray*}
 \sum_{S\in \binom{B}{s}} |N(S)| &= & \sum_{a\in A} \binom{deg(a)}{s} \\
 &\geq & |A| \binom{||G||/|A|}{s}\\
 &\geq & \frac{ 1 }{s ^ s } \frac{ ||G|| ^ s }{ |A| ^ { s-1 } } \\
 &\geq & \frac{ 1 }{ s ^ s } \frac{ \left( \xi \eta n ^ { \frac{ d + s - 1 }{ d } } \right) ^ { s } }{ \left( \xi n ^ { \frac{ s }{ d } } \right) ^ { s-1 } }\\
 &=& \xi \frac{ \eta ^ s }{ s ^ s } n^s.
 \end{eqnarray*}
 
 We used Jensen's inequality in the second line and the bounds $||G|| \geq \xi \eta n ^ { \frac { s }{ d } }$, $|A| \leq \xi n^{s/d} = \xi n ^ { \frac{ d + s - 1 }{ d } }$ given in the statement of the Proposition. Coming back to (\ref{C}) and recalling that $|B|=n$, we have
$$|\cC| \geq \frac{1}{\binom{q-1}{s}}\frac{1}{q}\left( \xi \frac{ \eta ^ s }{ s ^ s } n^s - 2Rn^s\right).$$
This implies that $|\cC| \geq z_1 n ^ s \geq z \binom{|A|}{d} $, 
where $z_1= z_1(q, d, s, \eta, \xi)$
 and $z = z(z_1, \xi)$, $z \geq \xi^{-d} z_1>0$, for $\xi> 2Rs^s/ \eta^s$. \\

 Next we estimate the largest number of members of $\cC$ that contain a fixed $d$-element subset $A'$ of $A$, $A'\in \cE$.
 In case some member of $\cC$ contains $A'$, we see that $A'$ is blue. That implies that that $|N(A')|\leq q-1$, so there are at most $\binom{q-1}{s}$ ways to choose an $s$-element subset $S$ in $N(A')$. 
 Each such subset $S$ contributes to at most one members of $\cC$ from $C(S)$ that contains $A'$. 
Deleting an edge from $\cE$ destroys at most $\binom{q-1}{s}$ members of $\cC$. Thus one needs to delete at least $|\cC|/\binom{q-1}{s} \geq \epsilon \binom{|A|}{d}$ edges from $\cH_{\blue}$ to destroy all members of $\cC$. Here, 
$\epsilon = \epsilon(q, d, s, \eta, \xi) \geq z/\binom{q-1}{s}$. Thus by the Hypergraph Removal Lemma with $C = C(q,d, s, \eta, \xi) =\delta(\epsilon, d, q)$, we have that the number of $q$-cliques in $\cH_{\rm blue}$ is at least $C\binom{|A|}{q}$.
 This proves Claim 4. \\
 
 Now, Claim 3 and Claim 4 give a contradiction by choosing $\kappa$ sufficiently small, specifically by choosing $\kappa \ll C(q, d, s, \eta)/c(q, d)$. This concludes the proof of the proposition.
\end{proof}

\vskip 0.5cm

\begin{proof}[Proof of Theorem \ref{thm:main}]
 Let $G'$ be a $K_{t,t}$-free bipartite graph with parts of size $n$, for $n$ sufficiently large, such that $||G'||\geq \epsilon n^{2-1/d}$, for some positive $\epsilon$. We shall show that
 $G'$ contains $W(k, d, r)$ as an induced subgraph with body in a specified part of $G'$. \\

 Let  $G = (A', B'; E')$ be an induced subgraph of $G'$ guaranteed by the Reduction Lemma \ref{lem:reduction}. Specifically, 
 $|G|$ is sufficiently large,  $G$ is $K$-almost regular for some constant $K= K(\epsilon)  > 0$, and 
 \begin{equation} \label{eq:lbG}
 ||G|| \geq (2\epsilon/5) |G|^{2-1/d} \geq (2\epsilon/5) |B'|^{2-1/d}.
 \end{equation}
Since $G$ is $K$-almost regular,  $|A'|\delta(G) \leq ||G|| \leq |B'| \Delta(G)$, we have that $|A'| \leq |B'|\Delta(G)/\delta(G) \leq K|B'|$.  In addition, since $||G||\leq |B'| \Delta(G) \leq |B'|\delta(G) K$,  we have that 
\begin{equation}\label{eq:delta}
\delta(G) \geq \epsilon' |B'|^{1-1/d},
\end{equation} 
 for $\epsilon' = 2\epsilon / (5K)$. Let  $\eta =  \epsilon'/4$ and $s=d-r-1$. Consider the constants $\kappa$ and $\xi$ given by the Proposition \ref{prop}.\\

 The idea of the proof is as follows. We shall find a subset $X$ of $B'$ of size $r$ and a subset $A$ of $A'$ such that $X\sim A$ (when $r\neq 0$) and  a subset $B$ of $B'-X$ such that $G[A,B]$ satisfies conditions for Proposition \ref{prop}. As a result, there is an induced copy of $H(k, d, d-r-1)$ in $G[A, B]$ with body in $A$. Together with $X$, it forms an induced  copy of $W(k, d, r)$ with its body in $A$. \\
 
{\bf Claim 1. }For $r>0$,  there are sets $X\subseteq B'$ and $A\subseteq A'$ such that  $$|X|=r,  ~|A|=\frac{\xi}{2} |B'|^{1 - r/d-1/d}, ~ \mbox{  and  } X\sim A.$$\\
To prove the claim,  apply Lemma \ref{thm:KST}  with $y_1=r$,  $y_2=\frac{\xi}{2} |B'|^{1 - r/d-1/d}$, $|Y_1|= |B'|$ and $|Y_2|\leq K|B'|$,  to get that 
\begin{eqnarray*}
||G|| &\leq & (y_2-1)^{\frac{1}{y_1}}(|Y_1|-y_1+1)|Y_2|^{1-\frac{1}{y_1}}+(y_1-1)|Y_2|  \\
& \leq & 2y_2^{\frac{1}{y_1}}|Y_1||Y_2|^{1-\frac{1}{y_1}}\\
&\leq &
2 (\xi |B'|^{1-\frac{r}{d} - \frac{1}{d}})^{\frac{1}{r}} |B'| (K|B'|)^{1-\frac{1}{r}} \\
&= &2 \xi^{\frac{1}{r}} K^{1- \frac{1}{r}} |B'|^{2-\frac{1}{d}- \frac{1}{rd}}< \frac{2\epsilon}{5} |B'|^{2-\frac{1}{d}},
\end{eqnarray*}
a contradiction to the lower bound on $||G||$, see (\ref{eq:lbG}). Note that the last inequality holds since $|B'|$ is sufficiently large. This proves Claim 1. \\


Let $0\leq \gamma< 1/t$, in particular $\gamma< 1-1/d$.  Let $V_{ \text{bad} } = \{ b \in B': |N(b) \cap A| \geq |A| ^ { 1 -\gamma}\}.$\\

{\bf Claim 2.}  $|V_{ \text{bad} }| \leq c(t) |A|^{\gamma}.$\\

From the definition of $V_ {\text{bad}}$, $||A, V_{ \text{bad} } || \geq |V_{ \text{bad}}| |A| ^ {1 -\gamma }$.
 Since $G$ is $K_{t,t}$-free, Lemma \ref{thm:KST} gives that $||A, V_{ \text{bad} }|| \leq (C/2) \left( |A| ^ {\frac{ t-1 }{t}} |V_{ \text{bad} }| + |A| \right)$, for $C=C(t)>0$.
Comparing the lower and upper bound on $||A, V_{ \text{bad} } ||$ gives 
$|V_{ \text{bad} }| \leq \frac{C|A| } { |A| ^ {1 - \gamma} - C|A| ^ {1-1/t}} \leq c|A|^{\gamma}, $ for $c=2C$.
This proves Claim 2.\\

{\bf Claim 3.}  Let $B = B'-V_{\text{bad}}$. Then $G[ A,B ]$ satisfies the conditions of Proposition \ref{prop} with $s=d-r-1$.\\
 
Note that $|A|= (\xi/2) |B'|^{1-r/d-1/d} = (\xi/2) |B'|^{s/d}$.
\begin{itemize}
\item{}
We have that $|B| \geq |B'|/2$, so $|A| = (\xi/2) |B'|^{s/d} \leq (\xi/2) (2|B|)^{s/d} \leq \xi |B|^{s/d}.$

\item{}
Since $|B|$ is large enough, the definition of $V_{\text {bad}}$ implies that  
$\text{max}_{ b \in B }  \deg_{G}(b)  \leq |A|^{1- \gamma}  \leq \kappa |A|.$

\item{}
Using (\ref{eq:delta}),  we have $\delta(G) \geq \epsilon' |B'|^{1-1/d}\geq  \epsilon' |B|^{1-1/d}$. Since $|A| = (\xi/2) |B'|^{s/d}\geq  (\xi/2) |B|^{s/d}$ and $X\subseteq V_{\text{bad}}$,  we have 
 \begin{eqnarray*}
  || A, B || & \geq & |A| (\delta(G) - |V_{\text{bad}}|) \\
  &\geq &  \frac{\xi}{2} |B|^{\frac{s}{d}} \left( \epsilon' |B|^{1-\frac{1}{d}} - 2c|A|^{\gamma}\right) \\
 &\geq & \xi |B|^{\frac{s}{d}} \frac{\epsilon'}{4} |B|^{1-\frac{1}{d}} \\
 &=& \xi\eta |B|^{\frac{d+s-1}{d}}. 
 \end{eqnarray*}
 \end{itemize}
 This proves Claim 3. \\

 Thus, by Proposition \ref{prop}, $G[ A,B]$ contains a copy of a hedgehog $H=H(k,d,s)$ with body in $A$. Recall that $X\subseteq V_{\text{bad}}$, so   $X\cap B= \emptyset$. Thus, the vertices of $H$ together with $X$ induce $W(k,d,r)$ with body in $A$ in $G$ and thus in the original graph $G'$.\\
 

 For the second part of the theorem, we use the Deletion-method Lemma \ref{lem:deletion}. Let $\delta >0$ be given. 
 Recall that $\gamma(H)= \frac{|H|-2}{||H||-1}$. We have that $\gamma(K_{t,t}) = (2t-2)/(t^2-1) = 2/(t+1)\leq 1/d + \delta$ for $t\geq 2d-1$ and 
 $$\gamma(W(k,d,r)) = \frac{k+ r+ (d-r-1) \binom{k}{d}-2}{rk + (d-r-1)d\binom{k}{d} -1} <\frac{1}{d}+\delta,$$ for large enough $k$. Thus by Lemma \ref{lem:deletion}, there is a bipartite graph $G$ with $n$ parts in each side, such that $K_{t,t}, W(k,d,r) \not\subseteq G$ and $||G||\geq n^{2-\frac{1}{d}-\delta}$. \end{proof}

\begin{proof}[Proof of Theorem \ref{thm:ST}]
 Let $d \geq 2$ be an integer and $H=(V', U'; E)$ be a $K_{d,d}$-free bipartite graph $H$ 
 such that every vertex in part $U'$ of $H$ has degree at most $d$. By Lemma \ref{lem:propertiesW} (1) we have that 
 $H\subseteq W(k, d, 0)$.
 Let $G$ be a graph on $n$ vertices and $\epsilon n^{2-1/d}$ edges, for some positive $\epsilon$. By considering the max cut of $G$ we may assume $G$ to be bipartite.
 Let $t=\max \{|V'|, |U'|\}$.
 If $G$ contains a copy of $K_{t,t}$, it contains a copy of $H$ as a subgraph.
 Otherwise, by Theorem \ref{thm:main}, $G$ contains a copy of $W(k, d, 0)$, that in turn contains a copy of $H$.
\end{proof}

\begin{proof}[Proof of Theorem \ref{thm:JP}]
 Let $k$ and $d$ be integers such that $t\geq d\geq 3$.
 Let $G$ be a bipartite graph on partite sets $A,B$ of size $n/2$ each and having VC-dimension at most $d$ with respect to $A$.
 Assume that $K_{t,t}\nsubseteq G$ and $||G|| \geq \epsilon n^{2-\frac{1}{d}}$ for some positive constant $\epsilon$.
 Then Theorem \ref{thm:main} yields that $G$ contains an induced copy of $W(d+1, d, d-2)$ with body in $A$, that, by Lemma \ref{lem:propertiesW} (2),
 contains an induced copy of the incidence graph of a hypergraph with vertex set $[d+1]$ and edge set $2^{[d+1]}$. 
 However, by the definition of VC-dimension, this implies that the VC-dimension of $G$ is at least $d+1$, a contradiction.
\end{proof}

\section{Concluding remarks}

In this paper, we provide an upper bound on the largest number of edges in a bipartite graph with $n$ vertices in each part,  not containing a given bipartite graph $H$ as as induced subgraph and not containing a large but fixed biclique, for any $H$ that is $K_{d,d}$-free and such that any vertex in one part has degree either at most $d$ or a full degree, $d\geq 2$.  It remains interesting to determine the induced Tur\'an function of an arbitrary bipartite graph and a bi-clique.
In addition, it would be very nice to resolve a conjecture of Conlon and Lee, \cite{CL} that  states that for any $K_{d,d}$-free bipartite graph $H$ with maximum degree at most $d$ in one part, there are positive constants $C$ and $\delta$ such that $\ex(n, H) \leq Cn^{2-1/d - \delta}$.

Lemma \ref{lem:ind} shows that in most cases  $\ex(n, \{F, H \ind\})$ is equal to either $\ex(n, F)(1+o(1))$ or $\ex(n, H)(1+o(1))$.  
There are pairs of graphs $F$ and $H$ for which it doesn't hold.  For example, consider $F=K_{t,t}$ and $H=P_\ell$, the path on $\ell$ edges, $\ell>1$.
Then $(t-1)n +o(n) \leq \ex(n, \{K_{t,t}, P_\ell\ind\}\}) \leq t^c n  $, where the lower bound  can be seen by considering pairwise vertex-disjoint union of cliques on $2t-1$ vertices each and the upper bound follows from a result by Scott, Seymour, and Spirkl \cite{SSS} mentioned in the introduction. 
On the other hand, $\ex(n, K_{t, t}) \geq cn^{2 - 2/{t+1}}$ and $\ex(n, P_\ell) \leq \ell n$. So, when $t$ is much larger than $\ell$, we see that  
$$ex(n, \{K_{t,t}, P_\ell\ind\}\}) \not\in \{ \ex(n, P_\ell)(1+o(1)), \ex(n, K_{t, t})(1+o(1))\}.$$ 
However, it is not clear whether 
$$\ex(n, \{F, H \ind\})\in \{ \Theta(\ex(n, F)), \Theta(\ex(n, H))\}$$   for all graphs $F$ and $H$. 
In the manuscript by Hunter, Milojevi\'c, Sudakov, and Tomon \cite{HMST} the authors independently state a conjecture similar to our question:
$$  \ex(n, \{K_{t,t}, H\ind\})\leq C(H, t) \ex(n, H),$$ for a constant $C(H,t)$ depending only on $H$ and $t$.\\

Note that instead of  $\ex(K_n, \{K_{t,t}, H\ind\})$, one can consider  $\ex(K_n, \{K_{t,t}, \cH\ind\})$ for a class of graphs $\cH$. When $\cH$ is the set of all subdivisions of $H$, this problem attracted a lot of attention, starting with a work of  K\"uhn and Osthus \cite{KO}.   Recently, the problem has been considered by  Gir\~ao and Hunter \cite{GH},   McCarty \cite{M}, Bourneuf,  Buci\'c, Cook, and Davies \cite{BBCD}, as well as by Du, Gir\~ao, Hunter, McCarty,  and Scott  \cite{DGHMS}.  \\

It is also not clear in general  whether  $\ex(K_n, \{F, H \ind\}) = O(\ex(K_{n,n}, \{F, H \ind\}))$. In the non-induced setting it holds by considering a maximal balanced cut.
We would like to conclude with a question whether our main result holds for $K_n$  instead of $K_{n,n}$:   
Is it true that for any non-negative integers $k,r,d,t$ with $d \geq r+2$, we have $\ex(K_n, \{K_{t,t}, W(k,d,r) \ind\}) = o(n ^ {2 - \frac{ 1 }{ d }})$?\\

\vskip 1cm
\noindent
{\bf Acknowledgments} The research of the first author was partially supported by the DFG grant FKZ AX 93/2-1. The authors thank Matija Buci\'c for bringing their attention to Lemma 7.1. in \cite{GH} and for interesting discussions.

\end{document}